\documentclass[11pt]{article}
\usepackage{mathrsfs}
\usepackage{bbm}
\usepackage{amsfonts}
\usepackage{amsmath,amssymb}
\usepackage{amsmath}
\usepackage{amssymb}
\usepackage{graphicx}

\setcounter{MaxMatrixCols}{10}

\setlength{\hoffset}{-0.4mm} \setlength{\voffset}{-0.4mm}
\setlength{\textwidth}{160mm} \setlength{\textheight}{235mm}
\setlength{\topmargin}{0mm} \setlength{\oddsidemargin}{0mm}
\setlength{\evensidemargin}{0mm} \setlength \arraycolsep{1pt}
\setlength{\headsep}{0mm} \setlength{\headheight}{0mm}
\newtheorem{theorem}{Theorem}[section]
\newtheorem{corollary}[theorem]{Corollary}
\newtheorem{definition}[theorem]{Definition}
\newtheorem{example}[theorem]{Example}
\newtheorem{lemma}[theorem]{Lemma}
\newtheorem{proposition}[theorem]{Proposition}
\newtheorem{remark}[theorem]{Remark}

\newenvironment{proof}[1][Proof]{\noindent \textbf{#1.} }{\  $\Box$}
\numberwithin{equation}{section}

\begin{document}

\title{\textbf{Stein Type Characterization for $G$-normal Distributions }}
\author{Mingshang Hu \thanks{%
Qilu Institute of Finance, Shandong University, Jinan, China. humingshang@sdu.edu.cn. Research supported by NSF
(No.11201262 and 11301068) and Shandong Province (No.BS2013SF020 and
ZR2014AP005). } \and %
 Shige
Peng\thanks{%
School of Mathematics and Qilu Institute of Finance, Shandong University, Jinan, China. peng@sdu.edu.cn. Research supported by NSF (No.11526205 and 11221061) and by the 111
Project (No. B12023).} \and Yongsheng Song\thanks{%
Academy of Mathematics and Systems Science, CAS, Beijing, China,
yssong@amss.ac.cn. Research supported  by NCMIS;
Key Project of NSF (No. 11231005); Key Lab of Random Complex
Structures and Data Science, CAS (No. 2008DP173182).}  }
\maketitle
\date{}

\begin{abstract} In this article,  we provide a Stein type characterization for $G$-normal distributions:  Let $\mathcal{N}[\varphi]=\max_{\mu\in\Theta}\mu[\varphi],\ \varphi\in C_{b,Lip}(\mathbb{R}),$ be a sublinear expectation. $\mathcal{N}$ is $G$-normal if and only if  for any $\varphi\in C_b^2(\mathbb{R})$, we have
 \[\int_\mathbb{R}[\frac{x}{2}\varphi'(x)-G(\varphi''(x))]\mu^\varphi(dx)=0,\] where $\mu^\varphi$ is  a  realization of $\varphi$ associated with $\mathcal{N}$, i.e., $\mu^\varphi\in \Theta$ and $\mu^\varphi[\varphi]=\mathcal{N}[\varphi]$.

\end{abstract}

\textbf{Key words}: $G$-normal distribution, Stein type characterization, $G$-expectation

\textbf{MSC-classification}: 35K55, 60A05, 60E05


\section{Introduction}
Peng (2007) introduced the notion of $G$-normal distribution via the viscosity solutions of the $G$-heat equation below
\begin {eqnarray*}
\partial_t u-G(\partial^2_x u)&=&0, \ (t,x)\in (0,\infty)\times \mathbb{R},\\
                        u(0,x)&=& \varphi (x),
\end {eqnarray*} where $G(a)=\frac{1}{2}(\overline{\sigma}^2 x^+-\underline{\sigma}^2 x^-)$, $a\in \mathbb{R}$ with $0\leq\underline{\sigma}\leq\overline{\sigma}<\infty$, and $\varphi\in C_{b,Lip}(\mathbb{R})$, the collection of bounded Lipstchiz functions on $\mathbb{R}$.

 Then the one-dimensional $G$-normal  distribution is defined by \[\mathcal{N}_G[\varphi]=u^\varphi(1,0),\] where $u^\varphi$ is the  viscosity solution to the $G$-heat equation with the initial value $\varphi$.

As is well known,  the fact that $\mu=\mathcal{N}(0, \sigma^2)$ if and only if
\begin {eqnarray}\label {SteinF}\int_{\mathbb{R}}[x\varphi'(x)-\sigma^2\varphi''(x)]\mu(dx), \ \textmd{for all} \ \varphi\in C_b^2(\mathbb{R}).
\end {eqnarray} This is the characterization of the normal
distribution presented in Stein (1972), which  is the basis of  Stein's method for normal approximation (see Chen, Goldstein and Shao (2011) and the references therein for more details).

What is the proper counterpart of (\ref {SteinF}) for $G$-normal distributions? An immediate conjecture should be \[\mathcal{N}_G[\mathcal{L}_G\varphi]=0, \ \textmd{for all} \ \varphi\in C_b^2(\mathbb{R}),\] where $\mathcal{L}_G\varphi(x)=\frac{x}{2}\varphi'(x)-G(\varphi''(x)).$ However, the above equality does not hold generally as was pointed in Hu et al (2015) by a counterexample.

By calculating some examples, we try to find the proper generalization of  $(\ref {SteinF})$ for $G$-normal distributions. As a sublinear expectation,  the $G$-normal distribution can be represented as
 \[\mathcal{N}_G[\varphi]=\max_{\mu\in\Theta_G}\mu[\varphi], \ \textmd{for all} \  \varphi\in C_{b,Lip}(\mathbb{R}),\] where $\Theta_G$ is a set of probabilities on $\mathbb{R}$. For $\varphi\in C_{b,Lip}(\mathbb{R})$, we call $\mu\in\Theta_G$ a \textit{realization} of $\varphi$ associated with $\mathcal{N}_G$ if $\mathcal{N}_G[\varphi]=\mu[\varphi].$
\begin {example} \label {E1} Set $\beta=\frac{\overline{\sigma}}{\underline{\sigma}}$ and $\sigma=\frac{\overline{\sigma}+\underline{\sigma}}{2}$. Song (2015) defined a periodic function $\phi_\beta$ as a variant of the trigonometric function $\cos x$ (see Figure 1).
\begin {equation}\phi_\beta(x)=
\begin {cases}\frac{2}{1+\beta}\cos (\frac{1+\beta}{2}x) & \textmd{for $x\in[-\frac{\pi}{1+\beta},\frac{\pi}{1+\beta})$;}\\
\frac{2\beta}{1+\beta}\cos (\frac{1+\beta}{2\beta}x+\frac{\beta-1}{2\beta}\pi) & \textmd{for $x\in[\frac{\pi}{1+\beta},\frac{(2\beta+1)\pi}{1+\beta})$.}
\end {cases}
\end {equation}
\begin {figure} [!ht]
\centering
\includegraphics[scale=1]{Figure1.jpg}
\caption {$\phi_\beta(x)$}
\end {figure}

\noindent It was proved that \[G(\phi''_\beta(x))=-\frac{\sigma^2}{2}\phi_\beta(x)\] and that $u(t,x):=e^{-\frac{1}{2}\sigma^2t}\phi_\beta(x)$  is a solution to the $G$-heat equation. Therefore
\begin {eqnarray}\label {exam1}u(t,x)=\mathcal{N}_G[\phi_\beta(x+\sqrt{t}\cdot)]=\mu^{t,x}[\phi_\beta(x+\sqrt{t}\cdot)],
 \end {eqnarray} where $\mu^{t,x}$ is a realization of $\phi_\beta(x+\sqrt{t}\cdot)$. Since $\mu^{s,x}[\phi_\beta(x+\sqrt{t}\cdot)]$ considered as a function of $s$ attains its maximum at $s=t$, we get
 \[\partial_tu(t,x)=\int_\mathbb{R}\phi'_\beta(x+\sqrt{t}y)\frac{y}{2\sqrt{t}}\mu^{t,x}(dy)=\frac{1}{t}\int_\mathbb{R}\partial_y\phi_\beta(x+\sqrt{t}y)\frac{y}{2}\mu^{t,x}(dy)\] by taking formal  derivation on the equality (\ref {exam1}) with respect to $t$. On the other hand, noting  $u(t,x)=e^{-\frac{1}{2}\sigma^2t}\phi_\beta(x)$, we have
  \begin {eqnarray*}\partial_t u(t,x)&=&-\frac{1}{2}\sigma^2u(t,x)\\
 &=&-\frac{1}{2}\sigma^2\int_\mathbb{R}\phi_\beta(x+\sqrt{t}y)\mu^{t,x}(dy) \\
 &=&\int_\mathbb{R}G(\phi''_\beta(x+\sqrt{t}y))\mu^{t,x}(dy)\\
 &=&\frac{1}{t}\int_\mathbb{R}G(\partial^2_y\phi_\beta(x+\sqrt{t}y))\mu^{t,x}(dy).
  \end {eqnarray*} Combing the above arguments, we get the following equality
  \[\int_\mathbb{R}[\frac{y}{2}\partial_y\phi_\beta(x+\sqrt{t}y)-G(\partial^2_y\phi_\beta(x+\sqrt{t}y))]\mu^{t,x}(dy)=0.\]
\end {example}
Inspired by  this example, we guess generally the following result  holds.
\begin {proposition} \label {main} Let $\varphi\in C^2_b(\mathbb{R})$. If $\mu^\varphi$ is a realization of $\varphi$ associated with the $G$-normal distribution $\mathcal{N}_G$, we have \[\int_\mathbb{R}[\frac{x}{2}\varphi'(x)-G(\varphi''(x))]\mu^\varphi(dx)=0.\]
\end {proposition}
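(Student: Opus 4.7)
The plan is to show separately that
\[ \int_\mathbb{R}\tfrac{x}{2}\varphi'(x)\,\mu^\varphi(dx) \;=\; G\bigl(\partial_x^2u^\varphi(1,0)\bigr) \;=\; \int_\mathbb{R} G(\varphi''(x))\,\mu^\varphi(dx), \]
where $u^\varphi(t,x) := \mathcal{N}_G[\varphi(x+\sqrt{t}\,\cdot)]$ is the viscosity solution to the $G$-heat equation with initial datum $\varphi$. The first equality will come from a scaling argument already hinted at in Example~\ref{E1}; the second will come from a $G$-Brownian-motion martingale computation. Subtracting the two identities then gives the proposition.

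For the scaling half, I would observe that for every $\lambda>0$, $\mu^\varphi[\varphi(\lambda\,\cdot)] \leq \mathcal{N}_G[\varphi(\lambda\,\cdot)] = u^\varphi(\lambda^2,0)$, using the scaling property of $G$-Brownian motion. Since $\mu^\varphi$ realizes $\varphi=\varphi(1\,\cdot)$, equality holds at $\lambda=1$, so the nonnegative function $h(\lambda) := u^\varphi(\lambda^2,0)-\mu^\varphi[\varphi(\lambda\,\cdot)]$ attains its minimum at the interior point $\lambda=1$. The vanishing of $h'(1)$, together with $\partial_tu^\varphi(1,0)=G(\partial_x^2u^\varphi(1,0))$ from the PDE, yields $\int y\varphi'(y)\,\mu^\varphi(dy) = 2G(\partial_x^2u^\varphi(1,0))$, the first identity above.

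For the martingale half, I would view $\mathcal{N}_G[\varphi]$ as $\widehat{E}[\varphi(B_1)]$ on the standard $G$-Brownian-motion set-up and pick $P^\varphi$ in the associated family $\mathcal{P}$ of semimartingale measures so that $\mu^\varphi = P^\varphi\circ B_1^{-1}$. Under $P^\varphi$, $B$ is a continuous martingale with $d\langle B\rangle_t = \sigma_t^2\,dt$, $\sigma_t\in[\underline{\sigma},\overline{\sigma}]$, and the fact that $u^\varphi(1-t,B_t)$ is a $P^\varphi$-martingale together with $\tfrac12\sigma_t^2\partial_x^2u^\varphi \leq G(\partial_x^2u^\varphi)$ forces the pathwise optimality
\[ \tfrac12\sigma_t^2\,\partial_x^2u^\varphi(1-t,B_t) \;=\; G\bigl(\partial_x^2u^\varphi(1-t,B_t)\bigr)\qquad dt\otimes dP^\varphi\text{-a.e.} \]
Differentiating the $G$-heat equation in $t$, the function $w:=\partial_tu^\varphi$ solves the linear equation $\partial_tw = G'(\partial_x^2u^\varphi)\,\partial_x^2w$ with $w(0,\cdot)=G(\varphi''(\cdot))$. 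Since the displayed optimality says exactly $\tfrac12\sigma_t^2 = G'(\partial_x^2u^\varphi(1-t,B_t))$ wherever $\partial_x^2u^\varphi\ne0$, Itô's formula applied to $w(1-t,B_t)$ under $P^\varphi$ has vanishing drift, so $w(1-t,B_t)$ is itself a $P^\varphi$-martingale. Equating its expectations at $t=0$ and $t=1$ gives $\int G(\varphi''(y))\,\mu^\varphi(dy) = w(1,0) = G(\partial_x^2u^\varphi(1,0))$, the second identity.

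The main obstacle will be the non-smoothness of $G$: the derivative $G'$ is undefined at $0$, so both the linearized equation for $w$ and the matching $\tfrac12\sigma_t^2 = G'(\partial_x^2u^\varphi)$ break on the set $\{\partial_x^2u^\varphi=0\}$, and one must also produce enough regularity of $u^\varphi$ (and $\partial_tu^\varphi$) to apply Itô. The natural remedy is to approximate $G$ by smooth sublinear $G_\delta\to G$ uniformly, so that the corresponding $G_\delta$-heat equations are uniformly parabolic with $C^\infty$ solutions $u^\varphi_\delta$ on $(0,\infty)\times\mathbb{R}$ (using $\underline{\sigma}>0$), carry out the entire argument in this smooth regime, and then pass $\delta\to 0$ by stability of viscosity solutions and dominated convergence; this approximation also absorbs the measure-zero worry about $\{\partial_x^2u^\varphi_\delta=0\}$ in the limit.
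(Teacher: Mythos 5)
Your first identity, $\int\frac{x}{2}\varphi'(x)\,\mu^\varphi(dx)=G(\partial_x^2u^\varphi(1,0))=\partial_tu^\varphi(1,0)$, is correct as you derive it: the first-order optimality of $h(\lambda)=u^\varphi(\lambda^2,0)-\mu^\varphi[\varphi(\lambda\cdot)]$ at $\lambda=1$ is a clean repackaging of what the paper extracts from Lemma \ref{lem-deri} (there the perturbation parameter is $t$ rather than $\lambda$, but the mechanism --- the fixed realization is suboptimal for the perturbed functional, with equality at the base point --- is identical), and it needs the same interior regularity of $u^\varphi$ at $(1,0)$. Your overall decomposition into two identities is also exactly the paper's. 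The trouble is the second identity. The paper proves $\int G(\varphi''(x))\,\mu^\varphi(dx)=\partial_tu^\varphi(1,0)$ by a two-sided squeeze on difference quotients, never differentiating the equation: the right derivative $\lim_{\delta\downarrow0}\delta^{-1}(u(1+\delta,0)-u(1,0))$, via the short-time expansion $u(\delta,x)=\varphi(x)+\delta G(\varphi''(x))+o(\delta)$ and the semigroup property, gives $\sup_{P\in\mathcal{P}_\varphi}E_P[G(\varphi''(B_1))]=\partial_tu(1,0)$; the left derivative, via the $G$-martingale representation theorem applied to $\varphi(B_{1-\delta})$ and $\varphi(B_1)$ plus a second-order Taylor expansion, gives $\partial_tu(1,0)\leq\inf_{P\in\mathcal{P}_\varphi}E_P[G(\varphi''(B_1))]$; together these force equality for \emph{every} realization. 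Your route instead linearizes the fully nonlinear equation and propagates $w=\partial_tu$ as a $P^\varphi$-martingale, and the repairs you propose do not close the gaps.

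Concretely: (i) the linearized equation $\partial_tw=G'(\partial_x^2u)\partial_x^2w$ and the It\^o step for $w(1-t,B_t)$ require $w\in C^{1,2}$, i.e. two more derivatives of $u$ than the interior $C^{1+\alpha/2,2+\alpha}$ regularity available for Lipschitz $G$, and on $\{\partial_x^2u=0\}$ both the linearization and the matching $\frac12\sigma_t^2=G'(\partial_x^2u)$ break down, with no argument that the occupation measure of $(1-t,B_t)$ does not charge that set. (ii) The smoothing cannot deliver what you ask of it: a differentiable, positively homogeneous $G_\delta$ on $\mathbb{R}$ is necessarily linear, so any genuine smoothing abandons positive homogeneity and with it the representation $G_\delta(a)=\sup_{\sigma\in[\underline{\sigma},\overline{\sigma}]}\frac12\sigma^2a$ and the envelope identity $\frac12\sigma_t^2=G_\delta'$ at the optimum, on which your vanishing-drift computation rests. (iii) Most seriously, the proposition concerns an \emph{arbitrary given} realization $\mu^\varphi$ of the original $\mathcal{N}_G$; after smoothing, the optimizing measures are those of $\mathcal{N}_{G_\delta}$, and their weak limits are merely \emph{some} realizations of $\mathcal{N}_G$, with no control that the prescribed $\mu^\varphi$ arises as such a limit --- so the passage $\delta\to0$ would at best prove the identity for one realization, not all. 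If instead you keep $P^\varphi$ fixed and smooth only the PDE, then $u_\delta(1-t,B_t)$ is no longer a $P^\varphi$-martingale and the pathwise optimality disappears. I would keep your first half and replace the second by the paper's difference-quotient and martingale-representation argument, which operates directly on the non-smooth $G$.
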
 To convince ourselves, let us calculate another simple example.

\begin {example} \label {E2} Let $\phi\in C^2(\mathbb{R})$ satisfy, for some $\rho\geq0$, \[\frac{x}{2}\phi'(x)+G(\phi''(x))=\rho \phi(x).\] It is easy to check that
$u(t,x)=(1+t)^\rho \phi(\frac{x}{\sqrt{1+t}})$ is a solution to the $G$-heat equation. Therefore
\begin {eqnarray}\label {exam2}u(t,x)=\mathcal{N}_G[\phi(x+\sqrt{t}\cdot)]=\mu^{t,x}[\phi(x+\sqrt{t}\cdot)]=(1+t)^\rho \phi(\frac{x}{\sqrt{1+t}}),
 \end {eqnarray} where $\mu^{t,x}$ is a realization of $\phi(x+\sqrt{t}\cdot)$.  By taking formal  derivation on  (\ref {exam2}) with respect to $t$, we get \begin {eqnarray} \label {exam2-1}
  \partial_t u(t,x)&=&\int_\mathbb{R}\phi'(x+\sqrt{t}y)\frac{y}{2\sqrt{t}}\mu^{t,x}(dy)\\
 \label {exam2-2} &=&\rho(1+t)^{\rho-1} \phi(\frac{x}{\sqrt{1+t}})-\frac{x}{2}(1+t)^{\rho-\frac{3}{2}}\phi'(\frac{x}{\sqrt{1+t}}).
  \end {eqnarray} Similarly, by taking formal  derivation on  (\ref {exam2}) with respect to $x$, we get \begin {eqnarray} \label {exam2-3}
  \partial_x u(t,x)=\int_\mathbb{R}\phi'(x+\sqrt{t}y)\mu^{t,x}(dy)=(1+t)^{\rho-\frac{1}{2}}\phi'(\frac{x}{\sqrt{1+t}}).
  \end {eqnarray} Note that $(\ref {exam2-2})\times (1+t)+ (\ref {exam2-3})\times \frac{x}{2}-(\ref {exam2})\times\rho=0,$ which implies
  \[\int_\mathbb{R}[\frac{y}{2\sqrt{t}}\phi'(x+\sqrt{t}y)-G(\phi''(x+\sqrt{t}y))]\mu^{t,x}(dy)=0.\] More precisely, we have
  \[\int_\mathbb{R}[\frac{y}{2}\partial_y\phi(x+\sqrt{t}y)-G(\partial_y^2\phi(x+\sqrt{t}y))]\mu^{t,x}(dy)=0,\] which is just the conclusion of Proposition \ref {main}.
\end {example}
Return to the linear case, the closed linear span  of the family of functions considered in either of the above two examples is the space of continuous functions, which  makes us confident that the conclusion of Proposition \ref {main} is correct.

Just like Stein's characterization of (classical) normal distributions, we are also concerned about the converse problem:

\noindent (\textbf{Q})\emph{ Let $\mathcal{N}[\varphi]=\max_{\mu\in\Theta}\mu[\varphi],\ \varphi\in C_{b,Lip}(\mathbb{R}),$ be a sublinear expectation. Assuming   $\mathcal{N}$ satisfies the  Stein type formula (SH) below, does it follow that $\mathcal{N}=\mathcal{N}_G $?}

 (\textbf{SH}) \   For $\varphi\in C_b^2(\mathbb{R})$, we have
 \[\int_\mathbb{R}[G(\varphi''(x))-\frac{x}{2}\varphi'(x)]\mu^\varphi(dx)=0,\] where $\mu^\varphi$ is a realization of $\varphi$ associated with $\mathcal{N}$, i.e., $\mu^\varphi\in \Theta$ and $\mu^\varphi[\varphi]=\mathcal{N}[\varphi]$.

Actually, we can also find evidences for the converse statement from some simple examples.
\begin {example} Assume that $\mathcal{N}$ is a sublinear expectation on $C_{b,Lip}(\mathbb{R})$  satisfying the  Stein type formula (SH). Set $u(t,x):=\mathcal{N}[\phi_\beta(x+\sqrt{t}\cdot)]$. We shall ``prove" that $u$ is the solution to the $G$-heat equation.  Actually, noting that $u(t,x)=\mathcal{N}[\phi_\beta(x+\sqrt{t}\cdot)]=\mu^{t,x}[\phi_\beta(x+\sqrt{t}\cdot)]$
with $\mu^{t,x}$  a realization of $\phi_\beta(x+\sqrt{t}\cdot)$, we get  \[\partial_tu(t,x)=\int_\mathbb{R}\phi'_\beta(x+\sqrt{t}y)\frac{y}{2\sqrt{t}}\mu^{t,x}(dy).\] So, from Hypothesis (SH), we get
\[\partial_tu(t,x)=\int_\mathbb{R}G(\phi''_\beta(x+\sqrt{t}y))\mu^{t,x}(dy)=-\frac{\sigma^2}{2}\int_\mathbb{R}\phi_\beta(x+\sqrt{t}y)\mu^{t,x}(dy)=-\frac{\sigma^2}{2}u(t,x).\] Then  $u(t,x)=e^{-\frac{\sigma^2}{2}t}\phi_\beta(x),$ which is the solution to the $G$-heat equation with  $u(0,x)=\phi_\beta(x)$.
\end {example}
Our purpose is to prove the Stein type formula for $G$-normal distributions (Proposition \ref {main}) and its converse problem (Q). In order to do so, we first prove a weaker version of the Stein type characterization below.
\begin {theorem}\label {Stein-C}
 Let $\mathcal{N}[\varphi]=\max_{\mu\in\Theta}\mu[\varphi],\ \varphi\in C_{b,Lip}(\mathbb{R}),$ be a sublinear expectation. $\mathcal{N}$ is $G$-normal if and only if  for any $\varphi\in C_b^2(\mathbb{R})$, we have
 \begin {align}\label {SHw}\sup_{\mu\in\Theta_\varphi}\int_\mathbb{R}[G(\varphi''(x))-\frac{x}{2}\varphi'(x)]\mu(dx)=0,\tag{SHw}
 \end {align} where $\Theta_\varphi=\{\mu\in \Theta:\mu[\varphi]=\mathcal{N}[\varphi]\}.$
\end {theorem}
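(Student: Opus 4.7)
Necessity is immediate from Proposition~\ref{main}: if $\mathcal{N}=\mathcal{N}_G$, then every realization $\mu\in\Theta_\varphi$ satisfies $\int[\tfrac{x}{2}\varphi'-G(\varphi'')]\,d\mu=0$, so the supremum in \eqref{SHw} trivially equals $0$. The substantial content is sufficiency, and the strategy is to show that $\mathcal{N}[\varphi]=\mathcal{N}_G[\varphi]$ for every $\varphi\in C_b^2(\mathbb{R})$ (which suffices by density in $C_{b,\mathrm{Lip}}(\mathbb{R})$) by identifying the function
\[u(t,x):=\mathcal{N}[\varphi(x+\sqrt{t}\cdot)],\qquad (t,x)\in[0,\infty)\times\mathbb{R},\]
with the unique viscosity solution of the $G$-heat equation $\partial_t u-G(\partial_x^2 u)=0$ with initial datum $\varphi$. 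Evaluating at $(t,x)=(1,0)$ then gives $\mathcal{N}[\varphi]=u(1,0)=\mathcal{N}_G[\varphi]$.

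The bridge between \eqref{SHw} and the PDE is an envelope construction. Fix $(t_0,x_0)$ with $t_0>0$ and, using the weak compactness of $\Theta$, pick a realization $\mu^{t_0,x_0}\in\Theta_{\varphi(x_0+\sqrt{t_0}\cdot)}$. The smooth function $w(t,x):=\mu^{t_0,x_0}[\varphi(x+\sqrt{t}\cdot)]$ satisfies $w\le u$ globally and $w(t_0,x_0)=u(t_0,x_0)$, with
\[\partial_t w(t_0,x_0)=\int\tfrac{y}{2\sqrt{t_0}}\varphi'(x_0+\sqrt{t_0}y)\,d\mu^{t_0,x_0}(y),\qquad \partial_x^2 w(t_0,x_0)=\int\varphi''(x_0+\sqrt{t_0}y)\,d\mu^{t_0,x_0}(y).\]
Applying \eqref{SHw} to $\psi_0(y):=\varphi(x_0+\sqrt{t_0}y)$ (via the chain rule and positive homogeneity of $G$), combined with Jensen's inequality $G(\int\varphi''\,d\mu)\le\int G(\varphi'')\,d\mu$, yields the classical supersolution bound $\partial_t w(t_0,x_0)\ge G(\partial_x^2 w(t_0,x_0))$ at the touching point. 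For the viscosity subsolution property of $u$, any $\psi\in C^{1,2}$ with $u-\psi$ locally maximal at $(t_0,x_0)$ sandwiches $w_\mu\le u\le\psi$ locally, with equality at $(t_0,x_0)$ for every realization $\mu\in\Theta_{\psi_0}$; this forces $\partial_t\psi(t_0,x_0)=\partial_t w_\mu(t_0,x_0)$ (a value independent of $\mu$) and $\partial_x^2\psi(t_0,x_0)\ge\partial_x^2 w_\mu(t_0,x_0)$, and the sup$=0$ structure of \eqref{SHw} along an optimizing sequence of realizations then yields $\partial_t\psi-G(\partial_x^2\psi)\le 0$.

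The main technical obstacle is the Jensen gap between $\int G(\varphi'')\,d\mu$ (the quantity \eqref{SHw} produces after the chain rule) and $G(\int\varphi''\,d\mu)=G(\partial_x^2 w_\mu)$ (what the PDE requires): since $G$ is merely convex and piecewise linear, this gap is strict in general, and closing it means extracting an optimizing sequence $\mu_n\in\Theta_{\psi_0}$ saturating the sup in \eqref{SHw}, which forces the limiting measure to concentrate where $\varphi''$ is single-signed so that $G$ acts linearly. A related subtlety appears for the viscosity supersolution direction, where a test function $\psi$ touching $u$ from below is not directly comparable to any smooth $w_\mu$; here one argues at the level of semijets, noting that $(\partial_t w_\mu,\partial_x w_\mu,\partial_x^2 w_\mu)(t_0,x_0)\in J^{2,-}u(t_0,x_0)$ and using the sup structure of \eqref{SHw} to extend the jet inequality $p-G(X)\ge 0$ to all $(p,q,X)\in J^{2,-}u(t_0,x_0)$. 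Once both viscosity inequalities are in hand along with the initial condition $u(0,\cdot)=\varphi$, the standard comparison principle for the $G$-heat equation identifies $u$ with $\mathcal{N}_G[\varphi(x+\sqrt{t}\cdot)]$, completing the proof.
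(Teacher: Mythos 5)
Your necessity direction is fine as far as it goes, though it leans on Proposition \ref{main}, whose proof in the paper requires the $G$-martingale representation theorem and in fact reuses steps from the proof of this very theorem; the paper instead proves necessity directly, and only in the sup-form, by combining the one-sided derivative formula $\partial_t^+u(1,0)=\sup_{\mu\in\Theta_\varphi}E_\mu[\tfrac{\xi}{2}\varphi'(\xi)]=\partial_t^-u(1,0)$ with the expansion $u(\delta,x)=\varphi(x)+\delta G(\varphi''(x))+o(\delta)$ to get $\partial_tu(1,0)=\sup_{\mu\in\Theta_\varphi}E_\mu[G(\varphi''(\xi))]$, which is exactly (\ref{SHw}).

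The real problem is sufficiency. You correctly identify the obstruction --- (\ref{SHw}) controls $\int G(\varphi'')\,d\mu$ while the PDE for $u(t,x)=\mathcal{N}[\varphi(x+\sqrt{t}\,\cdot)]$ needs $G(\int\varphi''\,d\mu)$ --- but you do not close it, and in the subsolution direction your route cannot close it: there you need a realization with $\int G(\varphi'')\,d\mu\le G(\int\varphi''\,d\mu)$, a \emph{reverse} Jensen inequality, and the claim that optimizing measures ``concentrate where $\varphi''$ is single-signed'' is asserted rather than proved; nothing in (\ref{SHw}), which only constrains the combination $\int[G(\varphi'')-\tfrac{y}{2}\varphi']\,d\mu$, forces such concentration. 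The supersolution half has a second gap: knowing that the particular smooth minorant $w_\mu$ satisfies $\partial_tw_\mu-G(\partial_x^2w_\mu)\ge0$ at the contact point says nothing about an arbitrary test function touching $u$ from below there (for $u(x)=|x|$ at $0$, both $0$ and $cx$ touch from below with unrelated derivatives), and your ``extend to all of $J^{2,-}u$'' step is precisely the missing argument. The paper sidesteps viscosity solutions entirely: it sets $w(s)=\mathbb{E}[v(s,\sqrt{1-s}\,\xi)]$ with $v$ the (interior-smooth) solution of the $G$-heat equation with data $\varphi$, computes $\partial_s^+w(s)$ as a supremum over realizations of $v(s,\sqrt{1-s}\,\cdot)$ (Lemma \ref{lem-deri2}), substitutes $\partial_tv=G(\partial_x^2v)$, and applies (\ref{SHw}) to the evolute $v(s,\sqrt{1-s}\,\cdot)\in C_b^2$ to conclude $\partial_s^+w\equiv0$, hence $\mathcal{N}[\varphi]=w(0)=w(1)=\mathcal{N}_G[\varphi]$. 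Because the hypothesis is applied to the heat-flow evolutes rather than to $\varphi$ itself, the term $\int G(\cdot)\,d\mu$ appears in exactly the form (\ref{SHw}) supplies and no Jensen step is ever needed; you should adopt this interpolation or supply a genuinely new argument bridging $\int G(\varphi'')\,d\mu$ and $G(\int\varphi''\,d\mu)$.
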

Since Hypothesis (\ref{SHw}) is weaker than  (\ref{SH}), the necessity of Theorem \ref {Stein-C} is weaker than that of  Proposition \ref {main}. At the same time, the sufficiency of Theorem \ref {Stein-C} implies the converse arguments (Q).

In Section 2 we provide two lemmas to show how the differentiation penetrates the sublinear expectations, which makes sense the ``formal derivation" in the above examples. In section 3, we give a proof to Theorem \ref {Stein-C}. We shall prove Proposition \ref {main} in Section 5 based on the $G$-expectation theory, and as a preparation we list some basic definitions and notations concerning $G$-expectation in Section 4.

\section {Two Useful Lemmas}
Let $\mathcal{N}[\varphi]=\max_{\mu\in\Theta}\mu[\varphi]$ be a sublinear expectation on  $C_{b,Lip}(\mathbb{R})$. Throughout this article, we suppose the following additional properties:
\begin{description}
\item[(H1)] $\Theta$ is weakly compact;

\item[(H2)] $\lim_{N\rightarrow\infty}\mathcal{N}[|x|1_{[|x|>N]}]=0.$
\end{description}
 Clearly, the tightness of $\Theta$ is already implied by (H2). We  emphasize by (H1) that $\Theta$ is weakly closed, which ensures that there exists a realization $\mu_\varphi$ for any $\varphi\in C_{b,Lip}(\mathbb{R})$.
In addition, it is easily seen that $\Theta$ can always be chosen as weakly closed for a sublinear expectations $\mathcal{N}$ on  $C_{b,Lip}(\mathbb{R})$.

Define $\xi: \mathbb{R}\rightarrow\mathbb{R}$ by $\xi(x)=x$. Sometimes, we write $\mathcal{N}[\varphi], \ \mu[\varphi]$ by $\mathbb{E}[\varphi(\xi)], \ E_\mu[\varphi(\xi)]$, respectively. For  $\varphi\in C_{b,Lip}(\mathbb{R})$, set $\Theta_\varphi=\{\mu\in \Theta:E_{\mu}[\varphi(\xi)]=\mathbb{E}[\varphi(\xi)]\}$.

For a sublinear expectation $\mathcal{N}$ on $C_{b,Lip}(\mathbb{R})$
and $\varphi\in C_{b,Lip}(\mathbb{R})$, set \[u(t,x):=\mathcal{N}[\varphi(x+\sqrt{t}\cdot)]=\mathbb{E}[\varphi(x+\sqrt{t}\xi)].\]

Define $\partial_{t}^+u(t,x)=\lim_{\delta\downarrow0}\frac{u(t+\delta,x)-u(t,x)}{\delta}$ (respectively, $\partial_{t}^-u(t,x)=\lim_{\delta\downarrow0}\frac{u(t-\delta,x)-u(t,x)}{-\delta})$ ) if the corresponding limits exist. Similarly, we can define  $\partial_{x}^+u(t,x)$ and  $\partial_{x}^-u(t,x)$.

\begin {lemma}\label {lem-deri} For a sublinear expectation $\mathcal{N}$ on $C_{b,Lip}(\mathbb{R})$
and $\varphi\in C_b^2(\mathbb{R})$, we have
\begin {eqnarray} \label {lem-deri-e1} \partial_{t}^+u(t,x)=\sup_{\mu\in\Theta_{t,x}}E_\mu[\frac{\xi}{2\sqrt{t}}\varphi'(x+\sqrt{t}\xi)], & & \partial_{t}^-u(t,x)=\inf_{\mu\in\Theta_{t,x}}E_\mu[\frac{\xi}{2\sqrt{t}}\varphi'(x+\sqrt{t}\xi)],\\
 \label {lem-deri-e2}\partial_{x}^+u(t,x)=\sup_{\mu\in\Theta_{t,x}}E_\mu[\varphi'(x+\sqrt{t}\xi)], & & \partial_{x}^-u(t,x)=\inf_{\mu\in\Theta_{t,x}}E_\mu[\varphi'(x+\sqrt{t}\xi)],
\end {eqnarray} where $\Theta_{t,x}=\Theta_{\varphi(x+\sqrt{t}\cdot)}$. Furthermore, we have
\begin {eqnarray}  \label {lem-deri-e3} \partial_{t}^+u(t,x)=\lim_{\delta\downarrow0}\partial_{t}^+u(t+\delta,x)=\lim_{\delta\downarrow0}\partial_{t}^-u(t+\delta,x), \\  \label {lem-deri-e4} \partial_{t}^-u(t,x)=\lim_{\delta\downarrow0}\partial_{t}^+u(t-\delta,x)=\lim_{\delta\downarrow0}\partial_{t}^-u(t-\delta,x).
\end {eqnarray} Similar relations hold for $\partial_{x}^+u(t,x), \partial_{x}^-u(t,x)$.

\end {lemma}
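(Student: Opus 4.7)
The plan is a standard envelope-theorem calculation, using weak compactness (H1) to extract limit points of approximate maximizers and the moment condition (H2) to transfer expectations through weak limits. A Taylor expansion of $\varphi\in C_b^2$ together with $\sqrt{t+\delta}-\sqrt t=\delta/(\sqrt{t+\delta}+\sqrt t)$ yields, as $\delta\downarrow 0$,
\[
\frac{1}{\delta}\bigl(\varphi(x+\sqrt{t+\delta}\xi)-\varphi(x+\sqrt t\xi)\bigr)\ \longrightarrow\ \frac{\xi}{2\sqrt t}\,\varphi'(x+\sqrt t\xi),
\]
pointwise in $\xi$, with the difference quotient dominated in absolute value by $C(1+|\xi|)$ uniformly in $\delta\in(0,1]$.

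To identify $\partial_t^+u(t,x)$, I would sandwich the difference quotient $\delta^{-1}[u(t+\delta,x)-u(t,x)]$ by well-chosen test measures. For any $\mu\in\Theta_{t,x}$, the relations $E_\mu[\varphi(x+\sqrt t\xi)]=u(t,x)$ and $E_\mu[\varphi(x+\sqrt{t+\delta}\xi)]\le u(t+\delta,x)$ produce a lower bound of $E_\mu[\delta^{-1}(\varphi(x+\sqrt{t+\delta}\xi)-\varphi(x+\sqrt t\xi))]$; dominated convergence under (H2) then yields $\liminf_{\delta\downarrow0}\delta^{-1}[u(t+\delta,x)-u(t,x)]\ge\sup_{\mu\in\Theta_{t,x}}E_\mu[\frac{\xi}{2\sqrt t}\varphi'(x+\sqrt t\xi)]$. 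For the reverse inequality I would pick $\mu_\delta\in\Theta_{t+\delta,x}$ (which exists by (H1)) to obtain the opposite bound with $\mu$ replaced by $\mu_\delta$, extract a subsequence $\mu_{\delta_n}\rightharpoonup\mu^*$ via (H1), and note that uniform convergence $\varphi(x+\sqrt{t+\delta_n}\xi)\to\varphi(x+\sqrt t\xi)$ forces $\mu^*\in\Theta_{t,x}$; the $\limsup$ of the upper bound is then controlled by $E_{\mu^*}[\frac{\xi}{2\sqrt t}\varphi'(x+\sqrt t\xi)]$. This proves the first formula of (\ref{lem-deri-e1}); the $\inf$ formula is symmetric, and (\ref{lem-deri-e2}) follows by the same argument, in fact more easily because $\varphi'$ is already bounded.

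For the continuity relations (\ref{lem-deri-e3}) and (\ref{lem-deri-e4}), I would use the representations established in the previous step, which exhibit each one-sided derivative as a sup or inf over the weakly compact set $\Theta_{s,x}$, and then argue that the correspondence $s\mapsto\Theta_{s,x}$ is upper-hemicontinuous at $s=t$ (again by the weak-limit-of-maximizers argument). Combined with joint continuity of $(\mu,s)\mapsto E_\mu[\frac{\xi}{2\sqrt s}\varphi'(x+\sqrt s\xi)]$ under (H2), a Berge-type maximum argument produces $\limsup_{\delta\downarrow 0}\partial_t^+u(t+\delta,x)\le\partial_t^+u(t,x)$, while the matching $\liminf$ comes from applying the earlier sandwich estimate at base time $t+\delta$ and letting $\delta\downarrow 0$. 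The main technical obstacle lies precisely in these limit passages: because $\frac{\xi}{2\sqrt t}\varphi'(x+\sqrt t\xi)$ grows linearly in $\xi$, ordinary weak convergence does not suffice to carry expectations to the limit, and (H2) is indispensable via a tail truncation $|\xi|>N$ whose remainder is uniformly small over $\Theta$, with uniform convergence of the truncated integrand on $\{|\xi|\le N\}$ handling the rest.
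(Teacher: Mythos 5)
Your proposal is correct and follows essentially the same route as the paper's proof: sandwich the difference quotient between test measures drawn from $\Theta_{t,x}$ (for the lower bound) and from $\Theta_{t+\delta,x}$ (for the upper bound), use weak compactness (H1) to extract a limit point $\mu^*$ and the uniform convergence of $\varphi(x+\sqrt{t+\delta}\,\cdot\,)$ to place $\mu^*$ in $\Theta_{t,x}$, and use (H2) with a tail truncation to pass the linearly growing integrand $\frac{\xi}{2\sqrt t}\varphi'(x+\sqrt t\xi)$ through the weak limit. The only place where your sketch is thinner than it should be is the second equality in (\ref{lem-deri-e3}): the convergence of the \emph{infimum} $\partial_t^-u(t+\delta,x)=\inf_{\mu\in\Theta_{t+\delta,x}}E_\mu[\cdot]$ to the \emph{supremum} $\partial_t^+u(t,x)=\sup_{\mu\in\Theta_{t,x}}E_\mu[\cdot]$ cannot follow from upper hemicontinuity of $s\mapsto\Theta_{s,x}$ and a Berge-type argument alone; it requires the rigidity hidden in the sandwich, namely that \emph{every} weak limit point of measures from $\Theta_{t+\delta,x}$ is automatically a maximizer of $E_\mu[\frac{\xi}{2\sqrt t}\varphi'(x+\sqrt t\xi)]$ over $\Theta_{t,x}$, because $\sup_{\Theta_{t,x}}\le\liminf_\delta\delta^{-1}[u(t+\delta,x)-u(t,x)]\le E_{\mu^*}[\cdot]\le\sup_{\Theta_{t,x}}$. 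Your appeal to ``the sandwich at base time $t+\delta$'' does supply this, since the upper half of the sandwich bounds the difference quotient by $E_\mu[g_{t+\delta}]+o(1)$ uniformly over $\mu\in\Theta_{t+\delta,x}$, so taking the infimum gives $\partial_t^-u(t+\delta,x)\ge\delta^{-1}[u(t+\delta,x)-u(t,x)]-\varepsilon_\delta\to\partial_t^+u(t,x)$; the paper packages the same fact as a sub-subsequence argument on realizing measures. With that step made explicit, the argument is complete.
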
\label {lem-deri2}
\begin {proof} We shall only give proof to (\ref {lem-deri-e1}) and  (\ref {lem-deri-e3}). The other conclusions can be proved similarly.

\textbf{Step 1.} Proof to (\ref {lem-deri-e1}).

By the definition of the function $u$ we have, for any $\mu^\delta\in \Theta_{t+\delta,x}$,
\begin{eqnarray}
\label {lem-deri-proof-e0}\frac{u(t+\delta,x)-u(t,x)}{\delta}&=&\frac{1}{\delta}\mathbb{E}[\varphi(x+\sqrt{t+\delta}\xi)]-\frac{1}{\delta}\mathbb{E}[\varphi(x+\sqrt{t}\xi)]\\
\label {lem-deri-proof-e1} &=& \frac{1}{\delta}E_{\mu^\delta}[\varphi(x+\sqrt{t+\delta}\xi)]-\frac{1}{\delta}\mathbb{E}[\varphi(x+\sqrt{t}\xi)]\\
\label {lem-deri-proof-e2}&\leq& \frac{1}{\delta}E_{\mu^\delta}[\varphi(x+\sqrt{t+\delta}\xi)]-\frac{1}{\delta}E_{\mu^\delta}[\varphi(x+\sqrt{t}\xi)]\\
\label {lem-deri-proof-e3} &=&E_{\mu^{\delta}}[\frac{\xi}{2\sqrt{t}}\varphi^{\prime}(x+\sqrt{t}\xi)]+o(1).
\end{eqnarray}

There exists a subsequence (still denoted by $\mu^{\delta}$) such that
$\mu^{\delta}\xrightarrow
{weakly} \mu^{*}\in \Theta$.

 Then, by (\ref {lem-deri-proof-e3}), we have
\[
\limsup_{\delta \downarrow0}\frac{u(t+\delta,x)-u(t,x)}{\delta}\leq \lim_{\delta \downarrow0} E_{\mu^{\delta}}[\frac{\xi}{2\sqrt{t}}\varphi^{\prime}(x+\sqrt{t}\xi)]=
E_{\mu^{*}}[\frac{\xi}{2\sqrt{t}}\varphi^{\prime}(x+\sqrt{t}\xi)].\]
Noting that
\begin {eqnarray*}|E_{\mu^\delta}[\varphi(x+\sqrt{t+\delta}\xi)]-E_{\mu^\delta}[\varphi(x+\sqrt{t}\xi)]|\leq \mathbb{E}[|\varphi(x+\sqrt{t+\delta}\xi)-\varphi(x+\sqrt{t}\xi)|]\rightarrow0,
\end {eqnarray*} we conclude, from (\ref {lem-deri-proof-e1}), that
\[E_{\mu^*}[\varphi(x+\sqrt{t}\xi)]=\lim_{\delta\downarrow 0}E_{\mu^\delta}[\varphi(x+\sqrt{t+\delta}\xi)]=\mathbb{E}[\varphi(x+\sqrt{t}\xi)],\]
 which means that $\mu^*\in \Theta_{t,x}$.

On the other hand, for any $\mu\in \Theta_{t,x}$ we get
\begin{eqnarray}
\label {lem-deri-proof-e4}\frac{u(t+\delta,x)-u(t,x)}{\delta} &=&\frac{1}{\delta
}\mathbb{E}[\varphi(x+\sqrt{t+\delta}\xi)]-\frac{1}{\delta
}E_\mu[\varphi(x+\sqrt{t}\xi)]\\
\label {lem-deri-proof-e5}&\geq&\frac{1}{\delta
}E_\mu[\varphi(x+\sqrt{t+\delta}\xi)]-\frac{1}{\delta
}E_\mu[\varphi(x+\sqrt{t}\xi)]\\
\label {lem-deri-proof-e6}&=&E_\mu[\frac{\xi}{2\sqrt{t}}\varphi^{\prime}(x+\sqrt{t}\xi)]+o(1).
\end{eqnarray}

Thus, by (\ref{lem-deri-proof-e6}), we get
\[
\liminf_{\delta \downarrow0}\frac{u(t+\delta,x)-u(t,x)}{\delta}\geq\sup_{\mu\in \Theta_{t,x}}
E_{\mu}[\frac{\xi}{2\sqrt{t}}\varphi^{\prime}(x+\sqrt{t}\xi)].
\] Combining the above arguments, we have \[
\lim_{\delta \downarrow0}\frac{u(t+\delta,x)-u(t,x)}{\delta}=\sup_{\mu\in \Theta_{t,x}}%
E_{\mu}[\frac{\xi}{2\sqrt{t}}\varphi^{\prime}(x+\sqrt{t}\xi)]=E_{\mu^*}[\frac{\xi}{2\sqrt{t}}\varphi^{\prime}(x+\sqrt{t}\xi)].
\]

\textbf{Step 2.} Proof to (\ref {lem-deri-e4}).

By (\ref {lem-deri-e1}), there exists $\mu^\delta\in \Theta_{t+\delta, x}$ such that $\partial_{t}^+u(t+\delta,x)=E_{\mu^\delta}[\frac{\xi}{2\sqrt{t+\delta}}\varphi'(x+\sqrt{t+\delta}\xi)]$. Noting that
\[E_{\mu^\delta}[\frac{\xi}{2\sqrt{t+\delta}}\varphi'(x+\sqrt{t+\delta}\xi)]-E_{\mu^\delta}[\frac{\xi}{2\sqrt{t}}\varphi'(x+\sqrt{t}\xi)]\rightarrow0,\]
it suffices to prove that $E_{\mu^\delta}[\frac{\xi}{2\sqrt{t}}\varphi'(x+\sqrt{t}\xi)]\rightarrow \partial_{t}^+u(t,x)$. Actually, for any subsequence of $(\mu^\delta)$, there is a sub-subsequence ($\mu^{\delta'}$) such that $\mu^{\delta'}\xrightarrow
{weakly} \mu^{*}\in \Theta$. By Step 1 we have $\mu^{*}\in \Theta_{t,x}$ and $\partial_{t}^+u(t,x)=E_{\mu^*}[\frac{\xi}{2\sqrt{t}}\varphi^{\prime}(x+\sqrt{t}\xi)]$. So we conclude that $E_{\mu^{\delta'}}[\frac{\xi}{2\sqrt{t}}\varphi'(x+\sqrt{t}\xi)]\rightarrow \partial_{t}^+u(t,x)$.
\end {proof}

For any $\varphi \in C_{b,Lip}(\mathbb{R})$, let $v(t,x)$ be the solution to
the $G$-heat equation with initial value $\varphi$. For a sublinear expectation $\mathcal{N}$ on $C_{b,Lip}(\mathbb{R})$,  set
$w_{\mathcal{N}}(s):=\mathbb{E}[v(s,\sqrt{1-s}\xi)]$,  $s\in \lbrack0,1]$.
\begin {lemma}\label {lem-deri2} For a sublinear expectation $\mathcal{N}$ on $C_{b,Lip}(\mathbb{R})$, we have, for $t\in(0,1)$
\begin {eqnarray}\label {lem-deri2-e1}\partial_t^+w_{\mathcal{N}}(t)=\sup_{\mu\in\Theta_{v(t,\sqrt{1-t}\cdot)}}E_\mu[\partial_{t}v(t,\sqrt{1-t}\xi)-\partial_{x}v(t,\sqrt{1-t}\xi)\frac{\xi
}{2\sqrt{1-t}}],  \\
\label {lem-deri2-e2}\partial_t^-w_{\mathcal{N}}(t)=\inf_{\mu\in\Theta_{v(t,\sqrt{1-t}\cdot)}}E_\mu[\partial_{t}v(t,\sqrt{1-t}\xi)-\partial_{x}v(t,\sqrt{1-t}\xi)\frac{\xi
}{2\sqrt{1-t}}].
\end {eqnarray}
\end {lemma}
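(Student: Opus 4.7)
The plan is to follow the two-step structure of the proof of Lemma~\ref{lem-deri}, now with the two-variable function $\Phi_s(y):=v(s,\sqrt{1-s}\,y)$ in place of $\varphi(x+\sqrt{t}\,y)$, so that $w_{\mathcal{N}}(s)=\mathbb{E}[\Phi_s(\xi)]$, and handle the extra $s$-dependence via the chain rule. Interior regularity of the $G$-heat equation yields, for $t\in(0,1)$, that $v$ is $C^{1,2}$ on a neighbourhood of $\{t\}\times\mathbb{R}$ with bounded $\partial_tv$ and bounded, $x$-Lipschitz $\partial_xv$, so that
\[
\frac{\Phi_{t+\delta}(y)-\Phi_t(y)}{\delta}\;\longrightarrow\;\partial_tv(t,\sqrt{1-t}\,y)-\partial_xv(t,\sqrt{1-t}\,y)\frac{y}{2\sqrt{1-t}},
\]
with an error $\varepsilon_\delta(y)$ satisfying $|\varepsilon_\delta(y)|\le C_\delta(1+|y|)$, $C_\delta\to0$; combined with (H2) this makes the convergence uniform in $\mu\in\Theta$ when integrated against $\mu$.

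For the upper bound in \eqref{lem-deri2-e1}, pick for each small $\delta>0$ a realization $\mu^\delta\in\Theta_{v(t+\delta,\sqrt{1-t-\delta}\cdot)}$; by sublinearity,
\[
\frac{w_{\mathcal{N}}(t+\delta)-w_{\mathcal{N}}(t)}{\delta}\;\le\;\frac{1}{\delta}E_{\mu^\delta}[\Phi_{t+\delta}(\xi)-\Phi_t(\xi)]\;=\;E_{\mu^\delta}\!\left[\partial_tv(t,\sqrt{1-t}\xi)-\partial_xv(t,\sqrt{1-t}\xi)\frac{\xi}{2\sqrt{1-t}}\right]+o(1).
\]
By (H1), pass to a subsequence with $\mu^\delta\xrightarrow{weakly}\mu^*\in\Theta$; the inequality $|E_{\mu^\delta}[\Phi_{t+\delta}(\xi)-\Phi_t(\xi)]|\le\mathbb{E}[|\Phi_{t+\delta}(\xi)-\Phi_t(\xi)|]\to0$ together with continuity of $s\mapsto\mathbb{E}[\Phi_s(\xi)]$ forces $E_{\mu^*}[\Phi_t(\xi)]=\mathbb{E}[\Phi_t(\xi)]$, i.e.\ $\mu^*\in\Theta_{v(t,\sqrt{1-t}\cdot)}$. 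For the matching lower bound, given any $\mu\in\Theta_{v(t,\sqrt{1-t}\cdot)}$,
\[
\frac{w_{\mathcal{N}}(t+\delta)-w_{\mathcal{N}}(t)}{\delta}\;\ge\;\frac{1}{\delta}E_\mu[\Phi_{t+\delta}(\xi)-\Phi_t(\xi)]\;\longrightarrow\;E_\mu\!\left[\partial_tv(t,\sqrt{1-t}\xi)-\partial_xv(t,\sqrt{1-t}\xi)\frac{\xi}{2\sqrt{1-t}}\right].
\]
Taking $\sup$ over $\mu$ and combining with the upper bound (attained at $\mu^*$) proves \eqref{lem-deri2-e1}; \eqref{lem-deri2-e2} follows by the symmetric argument with $t-\delta$ in place of $t+\delta$ and $\sup$ swapped for $\inf$.

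The main technical obstacle is the interchange of $\lim_{\delta\downarrow0}$ with the integrals $E_{\mu^\delta}[\,\cdot\,]$ uniformly over the family $(\mu^\delta)$: one needs $\delta^{-1}[\Phi_{t+\delta}(y)-\Phi_t(y)]$ to approach its limit in an $L^1(\mu)$-sense that is controlled uniformly in $\mu\in\Theta$, and one must handle the unbounded multiplier $\xi/(2\sqrt{1-t})$. Both issues are settled by combining the uniform interior $C^{1,2}$ smoothness of $v$ with the tail assumption (H2); once these estimates are in hand, the remainder of the argument is a faithful two-variable analogue of the proof of Lemma~\ref{lem-deri}.
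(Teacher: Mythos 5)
Your proposal is correct and follows essentially the same route as the paper: the same upper bound via realizations $\mu^\delta\in\Theta_{v(t+\delta,\sqrt{1-t-\delta}\cdot)}$ combined with weak compactness to extract $\mu^*\in\Theta_{v(t,\sqrt{1-t}\cdot)}$, and the same lower bound by testing against an arbitrary fixed $\mu\in\Theta_{v(t,\sqrt{1-t}\cdot)}$. Your explicit treatment of the Taylor remainder (an error bounded by $C_\delta(1+|y|)$ controlled uniformly over $\Theta$ via (H2)) just makes precise the $o(\delta)$ that the paper's proof uses implicitly.
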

\begin {proof}For any $\mu^\delta \in \Theta_{v(t+\delta,\sqrt{1-t-\delta}\cdot)}$, we have
\begin{eqnarray}
\label {lem-deri2-proof-e0} w_{\mathcal{N}}(t+\delta)-w_{\mathcal{N}}(t)&=&\mathbb{E}[v(t+\delta,\sqrt{1-t-\delta}\xi)]-\mathbb{E}%
[v(t,\sqrt{1-t}\xi)]\\
\label {lem-deri2-proof-e1}&=&E_{\mu^{\delta}}[v(t+\delta,\sqrt{1-t-\delta}\xi)]-\mathbb{E}[v(t,\sqrt
{1-t}\xi)]\\
\label {lem-deri2-proof-e2}&\leq& E_{\mu^{\delta}}[v(t+\delta,\sqrt{1-t-\delta}\xi)]-E_{\mu^{\delta}%
}[v(t,\sqrt{1-t}\xi)]\\
\label {lem-deri2-proof-e3}&=&E_{\mu^{\delta}}[\partial_{t}v(t,\sqrt{1-t}\xi)\delta-\partial_{x}%
v(t,\sqrt{1-t}\xi)\frac{\xi}{2\sqrt{1-t}}\delta]+o(\delta).
\end{eqnarray}
There exists a subsequence (still denoted by $\mu^{\delta}$) such that
$\mu^{\delta}\xrightarrow{weakly}\mu^{*}\in \Theta$. Noting that
\begin {eqnarray*}
& &|E_{\mu^{\delta}}[v(t+\delta,\sqrt{1-t-\delta}\xi)]-E_{\mu^{\delta}}[v(t,\sqrt{1-t}\xi)]|\\
&\leq& \mathbb{E}[|v(t+\delta,\sqrt{1-t-\delta}\xi)-v(t,\sqrt{1-t}\xi)|]\rightarrow 0,
\end {eqnarray*} we have, by (\ref{lem-deri2-proof-e1}),
\[E_{\mu^*}[v(t,\sqrt{1-t}\xi)]=\lim_{\delta\downarrow0}E_{\mu^{\delta}}[v(t+\delta,\sqrt{1-t-\delta}\xi)]=\mathbb{E}[v(t,\sqrt
{1-t}\xi)],\] i.e.,  $\mu^{*}$
belongs to $\Theta_{v(t,\sqrt{1-t}\cdot)}$. Then   by (\ref{lem-deri2-proof-e3}) we have
\[
\limsup_{\delta \downarrow0}\frac{w_{\mathcal{N}}(t+\delta)-w_{\mathcal{N}}(t)}{\delta}\leq E_{\mu^{*}%
}[\partial_{t}v(t,\sqrt{1-t}\xi)-\partial_{x}v(t,\sqrt{1-t}\xi)\frac{\xi
}{2\sqrt{1-t}}].
\]
On the other hand,  for each fixed $\mu\in \Theta_{v(t,\sqrt{1-t}\cdot
)}$, we have%
\begin{align}
\label {lem-deri2-proof-e4}w_{\mathcal{N}}(t+\delta)-w_{\mathcal{N}}(t)  & =\mathbb{E}[v(t+\delta,\sqrt{1-t-\delta}\xi)]-\mathbb{E}%
[v(t,\sqrt{1-t}\xi)]\\
\label {lem-deri2-proof-e5}& =\mathbb{E}[v(t+\delta,\sqrt{1-t-\delta}\xi)]-E_{\mu}[v(t,\sqrt{1-t}\xi)]\\
\label {lem-deri2-proof-e6}& \geq E_{\mu}[v(t+\delta,\sqrt{1-t-\delta}\xi)]-E_{\mu}[v(t,\sqrt{1-t}\xi)]\\
\label {lem-deri2-proof-e7}& =E_{\mu}[\partial_{t}v(t,\sqrt{1-t}\xi)\delta-\partial_{x}v(t,\sqrt{1-t}%
\xi)\frac{\xi}{2\sqrt{1-t}}\delta]+o(\delta).
\end{align}

Therefore, we conclude, by (\ref {lem-deri2-proof-e7}), that \[\liminf_{\delta \downarrow0}\frac{w_{\mathcal{N}}(t+\delta
)-w_{\mathcal{N}}(t)}{\delta}\geq \sup_{\mu\in\Theta_{v(t,\sqrt{1-t}\cdot)}}E_\mu[\partial_{t}v(t,\sqrt{1-t}\xi)-\partial_{x}v(t,\sqrt{1-t}\xi)\frac{\xi
}{2\sqrt{1-t}}].\] Combining the above arguments, we obtain (\ref{lem-deri2-e1}). By similar arguments, we can prove (\ref{lem-deri2-e2}).

\end {proof}
\section {Proof to Theorem \ref {Stein-C}}
We shall prove Theorem \ref {Stein-C} based mainly on the two lemmas introduced in Section 2.

\begin {proof} \textbf{Necessity.}

Assume that $\mathcal{N}$ is $G$-normal. Then, for $\varphi\in C_b^2(\mathbb{R})$, $u(t,x):=\mathcal{N}[\varphi(x+\sqrt{t}\cdot)]=\mathbb{E}[\varphi(x+\sqrt{t}\xi)]$ is the solution to
$G$-heat equation with initial value $\varphi$.

\textbf{Step 1.} For $\mu\in \Theta_\varphi$, \[\partial_tu(1,0)=E_{\mu}[\frac{\xi}{2}\varphi^{\prime}(\xi)].\] Actually, by Lemma \ref {lem-deri}, we have
\[\sup_{\mu\in\Theta_\varphi}E_\mu[\frac{\xi}{2}\varphi'(\xi)]=\partial_t^+u(1,0)=\partial_tu(1,0)=\partial_t^-u(1,0)=\inf_{\mu\in\Theta_\varphi}E_\mu[\frac{\xi}{2}\varphi'(\xi)].\]

\textbf{Step 2.} $\partial_tu(1,0)=\sup_{\mu\in \Theta_\varphi}E_{\mu}[G(\varphi^{''}(\xi))].$

Note that $u(1+\delta, 0)=\mathbb{E}[u(\delta, \xi)]$ and $u(\delta,x)=\mathbb{E}[\varphi(x+\sqrt{\delta} \xi)]=\varphi(x)+\delta G(\varphi''(x))+o(\delta)$ uniformly with respect to $x$. So, for $\mu^\delta\in\Theta_{u(\delta, \cdot)}$, we have
\begin {eqnarray}
\label {thm-main-proof-e0} u(1+\delta, 0)-u(1, 0)&=&\mathbb{E}[u(\delta, \xi)]-\mathbb{E}[\varphi( \xi)]\\
\label {thm-main-proof-e1} &=& E_{\mu^\delta}[u(\delta, \xi)]-\mathbb{E}[\varphi(\xi)]\\
\label {thm-main-proof-e2} &\leq& E_{\mu^\delta}[u(\delta, \xi)]-E_{\mu^\delta}[\varphi(\xi)]\\
\label {thm-main-proof-e3} &=& \delta E_{\mu^\delta}[G(\varphi''(\xi))] +o(\delta).
\end {eqnarray}
There exists a subsequence (still denoted by $\mu^{\delta}$) such that
$\mu^{\delta}\xrightarrow{weakly}\mu^{*}\in \Theta$. Noting that
\begin {eqnarray*}
|E_{\mu^{\delta}}[u(\delta, \xi)]-E_{\mu^{\delta}}[\varphi(\xi)]|\leq \mathbb{E}[|u(\delta, \xi)-\varphi(\xi)|]\rightarrow 0,
\end {eqnarray*} we have, by (\ref{thm-main-proof-e1}),
\[E_{\mu^*}[\varphi(\xi)]=\lim_{\delta\downarrow0}E_{\mu^{\delta}}[u(\delta, \xi)]=\mathbb{E}[\varphi(\xi)],\] i.e.,  $\mu^{*}$
belongs to $\Theta_{\varphi}$.

By (\ref {thm-main-proof-e3}) we obtain
\[\limsup_{\delta\downarrow0}\frac{u(1+\delta,0)-u(1,0)}{\delta}\leq E_{\mu^{*}}[G(\varphi''(\xi))].\]
On the other hand, for any $\mu\in \Theta_\varphi$, we have
\begin {eqnarray}
\label {thm-main-proof-e4} u(1+\delta, 0)-u(1, 0)&=&\mathbb{E}[u(\delta, \xi)]-\mathbb{E}[\varphi( \xi)]\\
\label {thm-main-proof-e5} &=& \mathbb{E}[u(\delta, \xi)]-E_\mu[\varphi(\xi)]\\
\label {thm-main-proof-e6} &\geq& E_{\mu}[u(\delta, \xi)]-E_{\mu}[\varphi(\xi)]\\
\label {thm-main-proof-e7} &=& \delta E_{\mu}[G(\varphi''(\xi))] +o(\delta).
\end {eqnarray} By (\ref{thm-main-proof-e7}), we get $\liminf_{\delta\downarrow0}\frac{u(1+\delta,0)-u(1,0)}{\delta}\geq\sup_{\mu\in \Theta_\varphi}E_{\mu}[G(\varphi^{''}(\xi))].$ Combining the above arguments, we get the desired result.

\textbf{Sufficiency.}

Assume $\mathcal{N}$ is a sublinear expectation on $C_{b,Lip}(\mathbb{R})$ satisfying Hypothesis (\ref {SHw}). For any $\varphi \in C_{b,Lip}(\mathbb{R})$, let $v(t,x)$ be the solution to
the $G$-heat equation with initial value $\varphi$. For $s\in \lbrack0,1]$, set
$w(s):=\mathbb{E}[v(s,\sqrt{1-s}\xi)]$. To prove the theorem, it suffices to
show that $w(0)=w(1)$.

By (\ref{lem-deri2-e1} ) in Lemma \ref {lem-deri2} and Hypothesis (\ref{SHw}), we get $\partial_s^+w(s)=0, s\in (0,1)$. Noting that $w$ is continuous on $[0,1]$ and locally Lipschitz continuous on $(0,1)$, we get $w(0)=w(1)$.

\end {proof}

\begin {corollary} Let $\mathcal{N}[\varphi]=\max_{\mu\in\Theta}\mu[\varphi],\ \varphi\in C_{b,Lip}(\mathbb{R}),$ be a sublinear expectation. Then $\mathcal{N}$ is $G$-normal if  for any $\varphi\in C_b^2(\mathbb{R})$, we have
 \begin {align}\label {SH}\int_\mathbb{R}[G(\varphi''(x))-\frac{x}{2}\varphi'(x)]\mu^\varphi(dx)=0,\tag{SH}
 \end {align} where $\mu^\varphi$ is  a  realization of $\varphi$ associated with $\mathcal{N}$, i.e., $\mu^\varphi\in \Theta$ and $\mu^\varphi[\varphi]=\mathcal{N}[\varphi]$.
\end {corollary}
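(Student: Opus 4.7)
The plan is almost embarrassingly short: the corollary follows immediately from Theorem \ref{Stein-C} once we observe that hypothesis (SH) is strictly stronger than hypothesis (SHw).

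First, I would unpack (SH). The statement requires that for every $\varphi\in C_b^2(\mathbb{R})$ and every realization $\mu^\varphi\in\Theta_\varphi$ (i.e., every $\mu\in\Theta$ with $\mu[\varphi]=\mathcal{N}[\varphi]$), we have
\[
\int_\mathbb{R}\Bigl[G(\varphi''(x))-\frac{x}{2}\varphi'(x)\Bigr]\mu^\varphi(dx)=0.
\]
Consequently, for fixed $\varphi$, the function $\mu\mapsto\int_\mathbb{R}[G(\varphi''(x))-\frac{x}{2}\varphi'(x)]\mu(dx)$ is identically zero on $\Theta_\varphi$.

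Next, taking the supremum over $\mu\in\Theta_\varphi$ of the zero function trivially yields $0$, so
\[
\sup_{\mu\in\Theta_\varphi}\int_\mathbb{R}\Bigl[G(\varphi''(x))-\frac{x}{2}\varphi'(x)\Bigr]\mu(dx)=0,
\]
which is precisely hypothesis (SHw) of Theorem \ref{Stein-C}. The sufficiency part of that theorem then delivers $\mathcal{N}=\mathcal{N}_G$.

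There is essentially no obstacle here — the only thing worth flagging is the interpretation of (SH), namely that the quantifier on $\mu^\varphi$ is universal (\emph{every} realization), as is consistent with the usage elsewhere in the paper (cf.\ Proposition \ref{main} and question (\textbf{Q})). Once that is noted, the implication (SH)$\Rightarrow$(SHw) is immediate, and no separate argument is needed.
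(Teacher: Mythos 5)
Your proof is correct and is exactly the argument the paper intends: the corollary is stated immediately after Theorem \ref{Stein-C} precisely because (SH), read with the universal quantifier over realizations, implies (SHw) (note that $\Theta_\varphi\neq\emptyset$ by (H1), so the supremum of the zero function over $\Theta_\varphi$ is indeed $0$ rather than $-\infty$). Nothing further is needed.
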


\section{Some Definitions and Notations about $G$-expectation}
We review some basic notions and definitions of the related spaces under $G$%
-expectation. The readers may refer to \cite{P07a}, \cite{P07b}, \cite{P08a}%
, \cite{P08b}, \cite{P10} for more details.

Let $\Omega_T=C_{0}([0,T];\mathbb{R}^{d})$ be the space of all $%
\mathbb{R}^{d}$-valued continuous paths $\omega=(\omega(t))_{t\geq0}\in
\Omega$ with $\omega(0)=0$ and let $B_{t}(\omega)=\omega(t)$ be the
canonical process.

Let us recall the definitions of $G$-Brownian motion and its corresponding $%
G $-expectation introduced in \cite{P07b}. Set
\begin{equation*}
L_{ip}(\Omega_{T}):=\{
\varphi(\omega(t_{1}),\cdots,\omega(t_{n})):t_{1},\cdots,t_{n}\in
\lbrack0,T],\  \varphi \in C_{b,Lip}((\mathbb{R}^{d})^{n}),\ n\in \mathbb{N}%
\},
\end{equation*}
where $C_{b,Lip}(\mathbb{R}^{d})$ is the collection of bounded Lipschitz
functions on $\mathbb{R}^{d}$.

We are given a function%
\begin{equation*}
G:\mathbb{S}_d\mapsto \mathbb{R}
\end{equation*}
satisfying the following monotonicity,  sublinearity and positive homogeneity:

\begin{description}
\item[A1.] \bigskip$G(a)\geq G(b),\  \ $if $a,b\in \mathbb{S}_d$
and $a\geq b;$

\item[A2.] $G(a+b)\leq G(a)+G(b)$, for each $a,b\in \mathbb{S}_d$;

\item[A3.] $  G(\lambda a)=\lambda
G(a)$ for  $a\in \mathbb{S}_d$ and $\lambda \geq0.$
\end{description}

\begin{remark}
When $d=1$, we have $G(a):=\frac{1}{2}(\overline{\sigma}^{2}a^{+}-\underline{%
\sigma}^{2}a^{-})$, for $0\leq \underline{\sigma}^{2}\leq \overline{\sigma}%
^{2}$.
\end{remark}

\bigskip \ For each $\xi(\omega)\in L_{ip}(\Omega_{T})$ of the form
\begin{equation*}
\xi(\omega)=\varphi(\omega(t_{1}),\omega(t_{2}),\cdots,\omega(t_{n})),\  \
0=t_{0}<t_{1}<\cdots<t_{n}=T,
\end{equation*}
we define the following conditional  $G$-expectation
\begin{equation*}
\mathbb{E}_{t}[\xi]:=u_{k}(t,\omega(t);\omega(t_{1}),\cdots,\omega
(t_{k-1}))
\end{equation*}
for each $t\in \lbrack t_{k-1},t_{k})$, $k=1,\cdots,n$. Here, for each $%
k=1,\cdots,n$, $u_{k}=u_{k}(t,x;x_{1},\cdots,x_{k-1})$ is a function of $%
(t,x)$ parameterized by $(x_{1},\cdots,x_{k-1})\in (\mathbb{R}^d)^{k-1}$, which
is the solution of the following PDE ($G$-heat equation) defined on $%
[t_{k-1},t_{k})\times \mathbb{R}^d$:
\begin{equation*}
\partial_{t}u_{k}+G(\partial^2_{x}u_{k})=0\
\end{equation*}
with terminal conditions
\begin{equation*}
u_{k}(t_{k},x;x_{1},\cdots,x_{k-1})=u_{k+1}(t_{k},x;x_{1},\cdots x_{k-1},x),
\, \, \hbox{for $k<n$}
\end{equation*}
and $u_{n}(t_{n},x;x_{1},\cdots,x_{n-1})=\varphi (x_{1},\cdots x_{n-1},x)$.

The $G$-expectation of $\xi(\omega)$ is defined by $\mathbb{E}[\xi]=%
\mathbb{E}_{0}[\xi]$. From this construction we obtain a natural norm $%
\left \Vert \xi \right \Vert _{L_{G}^{p}}:=\mathbb{E}[|\xi|^{p}]^{1/p}$, $p\geq 1$.
The completion of $L_{ip}(\Omega_{T})$ under $\left \Vert \cdot \right \Vert
_{L_{G}^{p}}$ is a Banach space, denoted by $L_{G}^{p}(\Omega_{T})$. The
canonical process $B_{t}(\omega):=\omega(t)$, $t\geq0$, is called a $G$%
-Brownian motion in this sublinear expectation space $(\Omega,L_{G}^{1}(%
\Omega ),\mathbb{E})$.

\begin {definition} A process $\{M_t\}$ with values in
$L^1_G(\Omega_T)$ is called a $G$-martingale if $\mathbb{E}_s(M_t)=M_s$
for any $s\leq t$. If $\{M_t\}$ and  $\{-M_t\}$ are both
$G$-martingales, we call $\{M_t\}$ a symmetric $G$-martingale.
\end {definition}

\begin{theorem}
\label{the2.7} (\cite{DHP11}) There exists a weakly compact subset $\mathcal{P}%
\subset\mathcal{M}_{1}(\Omega_{T})$, the set of probability measures on
$(\Omega_{T},\mathcal{B}(\Omega_{T}))$, such that
\[
\mathbb{E}[\xi]=\sup_{P\in\mathcal{P}}E_{P}[\xi]\ \ \text{for
\ all}\ \xi\in L_{ip}(\Omega_T).
\]
$\mathcal{P}$ is called a set that represents $\mathbb{E}$.
\end{theorem}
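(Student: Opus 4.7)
The plan is to build the representing set $\mathcal{P}$ by a Hahn--Banach style construction on $L_{ip}(\Omega_T)$, verify that the resulting linear functionals extend to countably additive probability measures, and then establish weak compactness via tightness estimates that come out of the $G$-heat equation.

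First, I would exploit the functional-analytic structure already present. The map $\mathbb{E}\colon L_{ip}(\Omega_T)\to\mathbb{R}$ is sublinear (subadditive and positively homogeneous by A2--A3 propagated through the PDE definition), monotone (by A1), and satisfies $\mathbb{E}[c]=c$ for constants. Regard $L_{ip}(\Omega_T)$ as a subspace of $C_b(\Omega_T)$. For each $\xi_0\in L_{ip}(\Omega_T)$, the Hahn--Banach theorem (sublinear version) produces a linear functional $\ell_{\xi_0}$ on $L_{ip}(\Omega_T)$ with $\ell_{\xi_0}\le\mathbb{E}$ pointwise and $\ell_{\xi_0}(\xi_0)=\mathbb{E}[\xi_0]$. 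Monotonicity of $\mathbb{E}$ forces $\ell_{\xi_0}$ to be positive, and $\mathbb{E}[1]=1$ gives $\ell_{\xi_0}(1)=1$. Writing $\mathcal{L}$ for the collection of all such dominated positive linear functionals, we already obtain $\mathbb{E}[\xi]=\sup_{\ell\in\mathcal{L}}\ell(\xi)$ on $L_{ip}(\Omega_T)$.

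Next I would upgrade each $\ell\in\mathcal{L}$ to a Borel probability measure on $\Omega_T$. The key analytic input is a continuity-from-above property: if $\xi_n\in L_{ip}(\Omega_T)$ and $\xi_n\downarrow 0$ pointwise, then $\mathbb{E}[\xi_n]\downarrow 0$. This is obtained from the PDE definition by uniform estimates on the viscosity solutions $u_k$ of the $G$-heat equation, together with the moment control $\mathbb{E}[|B_t|^p]\le C_p t^{p/2}$ (which follows from the explicit $G$-normal moment formulas). Passing this continuity to each $\ell\le\mathbb{E}$, the Daniell--Stone theorem (after density extension from $L_{ip}(\Omega_T)$ to $C_b(\Omega_T)$ in the uniform norm) produces a unique Borel probability measure $P_\ell$ on $(\Omega_T,\mathcal{B}(\Omega_T))$ with $\ell(\xi)=E_{P_\ell}[\xi]$. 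Setting $\mathcal{P}:=\{P_\ell:\ell\in\mathcal{L}\}$ yields the representation formula.

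For weak compactness, I would first show $\mathcal{P}$ is tight. The uniform moment bounds $\sup_{P\in\mathcal{P}}E_P[|B_t-B_s|^p]\le\mathbb{E}[|B_t-B_s|^p]\le C_p|t-s|^{p/2}$, valid for all $p$, combine with Kolmogorov's continuity criterion to give tightness on $C_0([0,T];\mathbb{R}^d)$: for any $\varepsilon>0$ one produces a compact subset $K_\varepsilon\subset\Omega_T$ (controlled moduli of continuity and bounded initial segment) with $\sup_{P\in\mathcal{P}}P(K_\varepsilon^c)<\varepsilon$. To pass from tightness to weak compactness I would close $\mathcal{P}$ in the weak topology; closedness is preserved because the defining inequality $E_P[\xi]\le\mathbb{E}[\xi]$ for $\xi\in C_b(\Omega_T)$ is closed under weak limits, and every weak limit remains a probability measure dominated by $\mathbb{E}$, hence is of the form $P_\ell$ for some $\ell\in\mathcal{L}$. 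Prokhorov's theorem then yields weak compactness.

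The main obstacle is the continuity-from-above step, because this is exactly where the PDE structure of $G$-expectation (rather than mere sublinearity) is essential; without it the Daniell--Stone extension fails and one only obtains finitely additive representatives. I would handle this by reducing the claim to cylinder functionals, where $\mathbb{E}[\xi_n]$ is computed by iterated viscosity solutions of the $G$-heat equation, and then using stability of viscosity solutions together with the comparison principle and moment estimates for $G$-normal distributions to push $\mathbb{E}[\xi_n]\to 0$.
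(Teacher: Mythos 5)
You should note at the outset that the paper does not prove Theorem \ref{the2.7} at all: it is quoted from \cite{DHP11}, where the representing family is built \emph{explicitly} as the weak closure of the set of laws $P_\theta$ of stochastic integrals $\int_0^{\cdot}\theta_s\,dW_s$ under a reference Wiener measure, with $\theta$ ranging over adapted processes valued in a bounded set determined by $G$. There the identity $\mathbb{E}[\xi]=\sup_\theta E_{P_\theta}[\xi]$ for cylinder functions is a dynamic-programming/verification argument for the $G$-heat equation, and weak compactness comes from Kolmogorov-type moment estimates that hold uniformly in $\theta$ precisely because each $P_\theta$ is an honest $\sigma$-additive measure from the outset. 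Your route (Hahn--Banach, then Daniell--Stone, then Prokhorov) is genuinely different, and its skeleton is fine: the dominated functionals exist, they are positive and normalized, and the closure/tightness argument at the end is standard \emph{once} one has $\sigma$-additive representatives.

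The genuine gap is the step you yourself flag as the main obstacle: continuity from above, i.e.\ $\xi_n\downarrow 0$ pointwise on $\Omega_T$ implies $\mathbb{E}[\xi_n]\downarrow 0$. Your sketch does not establish it. The $\xi_n$ are cylinder functions over \emph{varying and unboundedly many} time points, so the claim cannot be reduced to a single finite-dimensional $G$-heat equation, and Dini's theorem is unavailable because $\Omega_T$ is not compact (nor locally compact). The natural repair --- restrict to a compact set $K_\varepsilon$ of paths with controlled modulus of continuity whose complement has small ``uniform measure,'' then apply Dini on $K_\varepsilon$ --- needs a bound on $\sup_{P\in\mathcal{P}}P(K_\varepsilon^c)$, i.e.\ exactly the representation by countably additive measures that you are trying to construct; at the point where you would invoke it you only have finitely additive functionals $\ell$, for which $\ell(\mathbf{1}_{K_\varepsilon^c})$ is not even defined and tightness is meaningless. ``Stability of viscosity solutions and the comparison principle'' act timepoint-by-timepoint and do not touch this infinite-dimensional difficulty; this circularity is precisely why \cite{DHP11} work with the concrete family $\{P_\theta\}$ instead. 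A secondary slip: $L_{ip}(\Omega_T)$ is \emph{not} uniformly dense in $C_b(\Omega_T)$ (Stone--Weierstrass requires compactness), so the ``density extension'' should be deleted; this part is harmless, since Daniell--Stone applies directly to the lattice $L_{ip}(\Omega_T)$, whose generated $\sigma$-field is all of $\mathcal{B}(\Omega_T)$. But without an independent, non-circular proof of continuity from above, the Daniell--Stone step, and hence your whole construction, does not go through.
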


\begin{definition}
A function $\eta (t,\omega ):[0,T]\times \Omega _{T}\rightarrow \mathbb{R}$
is called a step process if there exists a time partition $%
\{t_{i}\}_{i=0}^{n}$ with $0=t_{0}<t_{1}<\cdot \cdot \cdot <t_{n}=T$, such
that for each $k=0,1,\cdot \cdot \cdot ,n-1$ and $t\in (t_{k},t_{k+1}]$
\begin{equation*}
\eta (t,\omega )=\xi_{t_k}\in L_{ip}(\Omega_{t_k}).
\end{equation*}%
 We denote by $M^{0}(0,T)$ the
collection of all step processes.
\end{definition}

 For a step process $\eta \in
M^{0}(0,T) $, we set the norm
\begin{equation*}
\Vert \eta \Vert _{H_{G}^{p}}^{p}:=\mathbb{E}[\{ \int_{0}^{T}|\eta
_{s}|^{2}ds\}^{p/2}], p\geq 1
\end{equation*}
and denote by $H_{G}^{p}(0,T)$ the completion of $%
M^{0}(0,T)$ with respect to the norms $\Vert \cdot \Vert _{H_{G}^{p}}$.

\begin{theorem}
(\cite{Song11}) For $\xi\in
L^\beta_G(\Omega_T)$ with some $\beta>1$, $X_t=\mathbb{E}_t(\xi)$, $
t\in[0, T]$ has the following decomposition:
\begin {eqnarray*}
X_t=X_0+\int_0^tZ_sdB_s+K_t, \ q.s.,
\end {eqnarray*}
 where $\{Z_t\}\in H^1_G(0, T)$  and $\{K_t\}$ is a continuous
 non-increasing $G$-martingale. Furthermore, the above decomposition is unique and
$\{Z_t\}\in H^\alpha_G(0, T)$, $K_T\in L^\alpha_G(\Omega_T)$ for any
$1\leq\alpha<\beta$.
\end{theorem}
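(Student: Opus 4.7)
The plan is to establish the decomposition first for a dense subclass of cylindrical random variables in $L_{ip}(\Omega_T)$, where $X_t=\mathbb{E}_t[\xi]$ can be read off the solution of the backward $G$-heat equation and $G$-It\^o's formula produces the decomposition explicitly, and then to extend to arbitrary $\xi\in L^\beta_G(\Omega_T)$ by approximation together with a priori $H^\alpha_G$ and $L^\alpha_G$ estimates on $Z$ and $K$. Uniqueness is extracted from pathwise quadratic-variation considerations.

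\textbf{Smooth case.} For $\xi=\varphi(B_{t_1},\dots,B_{t_n})\in L_{ip}(\Omega_T)$, the conditional expectation $X_t=\mathbb{E}_t[\xi]$ is, by the construction in Section 4, of the form $X_t=u_k(t,B_t;B_{t_1},\dots,B_{t_{k-1}})$ on $[t_{k-1},t_k)$, with $u_k$ solving $\partial_t u+G(\partial^2_x u)=0$. Applying $G$-It\^o's formula on each subinterval yields
\[
dX_t=\partial_x u_k(t,B_t)\,dB_t+\Bigl[\tfrac12 \partial^2_x u_k(t,B_t)\,d\langle B\rangle_t-G(\partial^2_x u_k(t,B_t))\,dt\Bigr].
\]
Setting $Z_t:=\partial_x u_k(t,B_t)$ and letting $K_t$ be the integral of the bracketed differential produces the claimed form. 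Continuity and monotonicity of $K$ follow from $G(a)=\sup_{\sigma^2\in[\underline\sigma^2,\overline\sigma^2]}\tfrac12 a\sigma^2$ combined with $d\langle B\rangle_t/dt\in[\underline\sigma^2,\overline\sigma^2]$ quasi-surely, which forces the bracketed integrand to be non-positive; $K$ is a $G$-martingale because both $X$ and $\int_0^\cdot Z_s\,dB_s$ are.

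\textbf{Approximation.} Pick $\xi_n\in L_{ip}(\Omega_T)$ with $\xi_n\to\xi$ in $L^\beta_G$, and write the decompositions $X^n=X^n_0+\int_0^\cdot Z^n_s\,dB_s+K^n$. Doob's inequality for $G$-martingales gives $\mathbb{E}[\sup_t|X^n_t-X^m_t|^\beta]\leq C\|\xi_n-\xi_m\|_{L^\beta_G}^\beta$. To separate $Z^n-Z^m$ from $K^n-K^m$, apply $G$-It\^o to $|X^n-X^m|^2$; the quadratic-variation term $\int_0^T|Z^n_s-Z^m_s|^2\,d\langle B\rangle_s$ is controlled by the Doob bound plus cross terms against the monotone processes $K^n-K^m$, and the cross terms are handled via H\"older at the price of dropping the exponent from $\beta$ to any $\alpha<\beta$. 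This delivers $Z^n\to Z$ in $H^\alpha_G(0,T)$ for every $\alpha<\beta$, and by subtraction $K^n_T\to K_T$ in $L^\alpha_G(\Omega_T)$. Passing to the limit in $X^n_t=X^n_0+\int_0^t Z^n_s\,dB_s+K^n_t$ yields the decomposition for $X_t$; continuity, monotonicity, and the $G$-martingale property of the limiting $K$ are stable under $L^\alpha_G$ convergence.

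\textbf{Uniqueness and main obstacle.} If $X$ has two decompositions with $(Z,K)$ and $(Z',K')$, subtracting gives $\int_0^t(Z_s-Z'_s)\,dB_s=K'_t-K_t$; the right side has zero quadratic variation as a difference of continuous processes of bounded variation, and the left side has quadratic variation $\int_0^t(Z_s-Z'_s)^2\,d\langle B\rangle_s$, forcing $Z=Z'$ in $H^1_G(0,T)$ and hence $K=K'$. The genuine difficulty is the separation estimate in the approximation step: in the classical It\^o setting the stochastic integral part and the bounded-variation part decouple immediately via $L^2$-orthogonality of quadratic variation, but here $K$ is one-sided (non-increasing, never symmetric) and the sublinearity of $\mathbb{E}$ prevents clean orthogonality. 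Producing a Cauchy inequality of the form $\|Z^n-Z^m\|_{H^\alpha_G}+\|K^n_T-K^m_T\|_{L^\alpha_G}\leq C_\alpha\|\xi_n-\xi_m\|_{L^\beta_G}^{\alpha/\beta}$ valid for every $\alpha<\beta$ (but generally failing at $\alpha=\beta$, which explains the strict inequality in the statement) is the technical core of the argument.
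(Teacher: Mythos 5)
This theorem is not proved in the paper at all: it is imported verbatim from Song (2011) as a tool for Section 5, so there is no in-paper proof to compare against. Judged on its own, your architecture (explicit decomposition for cylindrical $\xi$ via the $G$-heat equation and $G$-It\^o's formula, then approximation with a priori estimates, then uniqueness via quadratic variation) does match the route of the cited reference, and the smooth case and the uniqueness step are essentially sound. But the approximation step has a genuine gap at exactly the point where the real work lies. You invoke ``Doob's inequality for $G$-martingales'' to get $\mathbb{E}[\sup_t|X^n_t-X^m_t|^\beta]\leq C\|\xi_n-\xi_m\|_{L^\beta_G}^\beta$. No such inequality is available at the same exponent: $X^n-X^m$ is not a symmetric $G$-martingale, the classical stopping/submartingale proof of Doob's inequality does not survive sublinearity, and the best one has is a Doob-type maximal estimate for the $G$-evaluation, of the form $\mathbb{E}[\sup_t(\mathbb{E}_t[|\eta|])^\alpha]\leq C_{\alpha,\beta}\,(\|\eta\|_{L^\beta_G}^{\alpha}+\|\eta\|_{L^\beta_G}^{\,\cdot})$ valid only for $\alpha<\beta$. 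Establishing that maximal inequality is the principal technical content of Song (2011), and it --- not the H\"older step on cross terms --- is the primary source of the strict loss from $\beta$ to $\alpha$. Asserting the $\beta$-level bound as free begs the question.

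The second gap is that your separation argument via $G$-It\^o applied to $|X^n-X^m|^2$ cannot close as written: the cross term $\int(X^n-X^m)\,d(K^n-K^m)$ is bounded by $\sup_t|X^n_t-X^m_t|\cdot(|K^n_T|+|K^m_T|)$, so H\"older requires a uniform a priori bound on $\mathbb{E}[|K^n_T|^{p}]$ for some $p>1$ in terms of $\|\xi_n\|_{L^\beta_G}$. You never establish such a bound, and it is not automatic; it again rests on the maximal inequality above applied to $\mathbb{E}_t[|\xi_n|]$. You correctly identify the Cauchy estimate as ``the technical core,'' but the proposal stops at naming it rather than supplying the two ingredients (the sublinear Doob inequality and the uniform $K_T$ bound) that make it provable.
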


\section{Proof to Proposition \ref {main}}

 Let $\mathcal{P}$ be a  weakly compact set that represents $\mathbb{E}$. Then the corresponding $G$-normal distribution can be represented as
 \[\mathcal{N}_G[\varphi]=\max_{P\in \mathcal{P}}E_P[\varphi(B_1)], \ \textmd{for all} \  \varphi\in C_{b,Lip}(\mathbb{R}).\]  Clearly, $\mathcal{N}_G$ satisfies condition (H2) and $\Theta:=\{P\circ B_1^{-1}| \ P\in \mathcal{P}\}$ is weakly compact. Also, Proposition \ref {main} can be restated in the following form.

 \begin {proposition} Let $\varphi\in C^2_b(\mathbb{R})$.  For $P\in \mathcal{P}$ such that $E_{P}[\varphi(B_1)=\mathbb{E}[\varphi(B_1)]$, we have
 \[E_{P}[\frac{B_{1}}{2}\varphi^{\prime}(B_{1})-G(\varphi^{\prime \prime}(B_{1}))]=0.\]
\end {proposition}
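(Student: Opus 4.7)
The plan is to combine Song's $G$-martingale decomposition with a classical It\^o calculation under the representing measure $P$. Let $u(t,x):=\mathbb{E}[\varphi(x+B_t)]$ denote the solution of the $G$-heat equation (so $u(1,0)=\mathbb{E}[\varphi(B_1)]$), and set $M_t:=\mathbb{E}_t[\varphi(B_1)]=u(1-t,B_t)$. The $G$-martingale decomposition gives
\[M_t = u(1,0) + \int_0^t \partial_x u(1-s,B_s)\,dB_s + K_t,\]
where $K$ is a non-increasing $G$-martingale with $K_0=0$. For any $P\in\mathcal{P}$ with $E_P[\varphi(B_1)]=\mathbb{E}[\varphi(B_1)]$, the stochastic integral is a true $P$-martingale, so $E_P[K_1]=0$; the monotonicity of $K$ then forces $K_t\equiv 0$ on $[0,1]$ $P$-a.s. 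Comparing this with the classical It\^o expansion of $u(1-t,B_t)$ under $P$ (writing $d\langle B\rangle^P_s=v_s^P\,ds$) yields the pointwise identity
\[\tfrac{1}{2}\partial_x^2 u(1-s,B_s)\,v_s^P \;=\; G\bigl(\partial_x^2 u(1-s,B_s)\bigr),\qquad P\otimes ds\text{-a.e.}\]
Since $G'(a)=\tfrac{1}{2}\overline{\sigma}^2$ for $a>0$ and $G'(a)=\tfrac{1}{2}\underline{\sigma}^2$ for $a<0$, this pins down $v_s^P = 2G'(\partial_x^2 u(1-s,B_s))$ on the set $\{\partial_x^2 u(1-s,B_s)\neq 0\}$.

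The key step is then to apply classical It\^o under $P$ to the process $\partial_t u(1-t,B_t)=G(\partial_x^2 u(1-t,B_t))$. Its $dt$-drift equals $-\partial_t^2 u(1-t,B_t) + \tfrac{1}{2}v_t^P\,\partial_x^2\partial_t u(1-t,B_t)$; differentiating the PDE $\partial_t u=G(\partial_x^2 u)$ in $t$ gives $\partial_t^2 u=G'(\partial_x^2 u)\,\partial_x^2\partial_t u$ wherever $\partial_x^2 u\neq 0$, and substituting $G'(\partial_x^2 u)=\tfrac{1}{2}v_t^P$ from the previous step shows the drift vanishes $P\otimes dt$-a.e. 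Hence $\partial_t u(1-t,B_t)$ is a $P$-martingale on $[0,1]$; interpreting $\partial_t u(0,B_1):=G(\varphi''(B_1))$ by the boundary value of the PDE, evaluating at $t=0$ and $t=1$ yields
\[E_P\bigl[G(\varphi''(B_1))\bigr] \;=\; E_P\bigl[\partial_t u(0,B_1)\bigr] \;=\; \partial_t u(1,0).\]
Lemma \ref{lem-deri} applied at $(1,0)$ already gives $E_P[\tfrac{B_1}{2}\varphi'(B_1)] = \partial_t u(1,0)$ for every such $P$ (by the sup/inf equality ensured by differentiability of $u$ in $t$), so subtracting the two identities produces the desired $E_P\bigl[\tfrac{B_1}{2}\varphi'(B_1)-G(\varphi''(B_1))\bigr]=0$.

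\textbf{Main obstacle.} The chief technical obstacle is justifying the It\^o formula for $G(\partial_x^2 u(1-t,B_t))$, which requires regularity of $u$ up to $\partial_x^2\partial_t u$ and a careful treatment of the exceptional set $\{\partial_x^2 u=0\}$ where $G$ fails to be differentiable. I expect this to be handled by first approximating $\varphi\in C_b^2$ by a smoother sequence $\varphi_n$ for which the associated $u_n$ lies in $C^{1,3}$ and $\partial_x^2 u_n$ has a well-controlled zero set, proving the identity for each $\varphi_n$, and then passing to the limit using the weak compactness of $\mathcal{P}$ together with continuity of the $G$-heat semigroup. The set $\{\partial_x^2 u=0\}$ itself contributes nothing to the drift, since both sides of $\partial_t^2 u = G'(\partial_x^2 u)\,\partial_x^2\partial_t u$ degenerate there.
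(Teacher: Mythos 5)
Your overall strategy --- squeezing $E_P[G(\varphi''(B_1))]$ between $\partial_t u(1,0)$ from both sides and combining with $E_P[\tfrac{B_1}{2}\varphi'(B_1)]=\partial_t u(1,0)$ from Lemma \ref{lem-deri} --- is the paper's, and your starting point (the $G$-martingale decomposition of $\mathbb{E}_t[\varphi(B_1)]$ together with the observation that $E_P[K_1]=0$, hence $K\equiv 0$ $P$-a.s., for every maximizing $P$) is also the paper's. The genuine gap is the step where you apply It\^o's formula under $P$ to $\partial_t u(1-t,B_t)=G(\partial_x^2 u(1-t,B_t))$. This requires $u$ to be (at least weakly) three times differentiable in $x$, plus the identity $\partial_t^2 u=G'(\partial_x^2 u)\,\partial_t\partial_x^2 u$ across the interface $\{\partial_x^2 u=0\}$. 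Because the $G$-heat equation is fully nonlinear with a merely piecewise-linear nonlinearity $G$, its solutions are in general only of class $C^{1+\alpha/2,2+\alpha}$ even for $\varphi\in C_b^\infty$; the third $x$-derivative typically jumps exactly on the set $\{\partial_x^2 u=0\}$ that you propose to dismiss, so smoothing $\varphi$ does not produce the $C^{1,3}$ regularity your It\^o step needs. Moreover, even granting the identity for each $\varphi_n$, the limit passage is not innocent: the conclusion is quantified over all $P\in\mathcal{P}_\varphi$, and a given maximizer $P$ for $\varphi$ need not be (a weak limit of) maximizers for the $\varphi_n$, so weak compactness of $\mathcal{P}$ only yields the identity for accumulation points of maximizers of the $\varphi_n$, not for every $P\in\mathcal{P}_\varphi$.

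The paper avoids all of this by never differentiating $u$ more than twice. For the missing inequality $\partial_t u(1,0)\leq E_P[G(\varphi''(B_1))]$ it writes the backward difference quotient $\frac{u(1-\delta,0)-u(1,0)}{-\delta}$, uses the martingale decomposition only to insert $K_1$ (whose $P$-expectation vanishes for $P\in\mathcal{P}_\varphi$), Taylor-expands $\varphi$ itself --- not $u$ --- over the increment $B_1-B_{1-\delta}$, and bounds $\frac{1}{2\delta}E_P[\varphi''(B_{1-\delta})(B_1-B_{1-\delta})^2]\leq E_P[G(\varphi''(B_{1-\delta}))]+o(1)$ via the conditional sublinear expectation of the squared increment; only $\varphi\in C_b^2$ and $u\in C^{1,2}$ are used. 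To salvage your route you would need to replace the It\^o computation on $\partial_t u(1-t,B_t)$ by such a difference-quotient estimate, or else supply a genuinely new third-order regularity result for the $G$-heat equation.
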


\begin {proof} For $\varphi\in C^2_b(\mathbb{R})$, set $u(t,x)=\mathbb{E}[\varphi(x+B_{t})]$.  As a solution to the $G$-heat equation, we know $u\in C^{1,2}_b(\mathbb{R})$. Particularly, one has
\[
\lim_{\delta \downarrow0}\frac{u(1+\delta,0)-u(1,0)}{\delta}=\lim
_{\delta \downarrow0}\frac{u(1-\delta,0)-u(1,0)}{-\delta}=\partial_tu(1,0).
\]
Set $\mathcal{P}_\varphi=\{P\in \mathcal{P}:E_{P}[\varphi(B_1)=\mathbb{E}[\varphi(B_1)]\}.$ In the proof to Theorem \ref {Stein-C}, we have already  proved that
 for $P\in \mathcal{P}_\varphi$, $\partial_tu(1,0)=E_{P}[\frac{B_{1}}{2}\varphi^{\prime}(B_{1})]$ and that $\partial_tu(1,0)=\sup_{P\in \mathcal{P}_\varphi}
E_{P}[G(\varphi^{\prime \prime}(B_{1}))].$ We shall just  prove $
\partial_tu(1,0)=\inf_{P\in \mathcal{P}_\varphi}%
E_{P}[G(\varphi^{\prime \prime}(B_{1}))].$

By the $G$-martingale representation theorem,  we have
\[%
\begin{array}
[c]{l}%
u(1-\delta,0)=\varphi(B_{1-\delta})-\int_{0}^{1-\delta}z_{s}^{\delta}%
dB_{s}-K_{1-\delta}^{\delta},\\
u(1,0)=\varphi(B_{1})-\int_{0}^{1}z_{s}dB_{s}-K_{1},
\end{array}
\] where $\{K^{\delta}_t\}, \{K_t\}$ are non-increasing $G$-martingales with $K^{\delta}_0=K_0=0$. Thus%
\[%
\begin{array}
[c]{rl}%
u(1-\delta,0)-u(1,0) & =\mathbb{E}[\varphi(B_{1-\delta})-\varphi
(B_{1})+K_{1}]\\
& =\mathbb{E}[-\frac{1}{2}\varphi^{\prime \prime}(B_{1-\delta})(B_{1}-B_{1-\delta
})^{2}+K_{1}]+o(\delta).
\end{array}
\]
For each $P\in \mathcal{P}_\varphi$,
\[%
\begin{array}
[c]{rl}%
\frac{u(1-\delta,0)-u(1,0)}{-\delta} & =\frac{1}{-2\delta}\mathbb{E%
}[-\varphi^{\prime \prime}(B_{1-\delta})(B_{1}-B_{1-\delta})^{2}+K_{1}]+o(1)\\
& \leq \frac{1}{2\delta}E_{P}[\varphi^{\prime \prime}(B_{1-\delta})(B_{1}%
-B_{1-\delta})^{2}]+o(1)\\
& \leq E_{P}[G(\varphi^{\prime \prime}(B_{1-\delta}))]+o(1).
\end{array}
\]
Thus
\[
\sup_{P\in \mathcal{P}_\varphi}%
E_{P}[G(\varphi^{\prime \prime}(B_{1}))]=\partial_tu(1,0)\leq \inf
_{P\in \mathcal{P}_\varphi}E_{P}[G(\varphi^{\prime \prime}(B_{1}))].
\] Consequently,  for $P\in \mathcal{P}_\varphi$, we have $\partial_tu(1,0)=E_{P}[G(\varphi^{\prime \prime}(B_{1}))]$ .

\end {proof}

\begin{remark}
In \cite{HJ}, the authors used the similar idea to obtain the variation equation for the cost functional associated with the stochastic
recursive optimal control problem.
\end{remark}

\begin {corollary} Let $H\in C^2(\mathbb{R})$ with polynomial
growth satisfy, for some $\rho>0$, \[\frac{x}{2}H'(x)-G(H''(x))=\rho H(x).\] Then we have \[\mathbb{E}[H(B_1)]=0.\]
\end {corollary}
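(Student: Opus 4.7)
The plan is to apply Proposition \ref{main} directly with $\varphi$ replaced by $H$. Granting this extension, for any realization $\mu^H\in\Theta_H$ the hypothesis $\tfrac{x}{2}H'(x)-G(H''(x))=\rho H(x)$ turns the Stein formula into
\[
0=\int_\mathbb{R}\Bigl[\tfrac{x}{2}H'(x)-G(H''(x))\Bigr]\,\mu^H(dx)=\rho\int_\mathbb{R}H(x)\,\mu^H(dx)=\rho\,\mu^H[H]=\rho\,\mathbb{E}[H(B_1)],
\]
the last equality using that $\mu^H$ attains the maximum of $H$ over $\Theta$. Since $\rho>0$, this immediately yields $\mathbb{E}[H(B_1)]=0$.

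The only obstacle is that Proposition \ref{main} is formulated for $\varphi\in C_b^2(\mathbb{R})$, whereas $H$ is merely $C^2$ with polynomial growth. I would overcome this by revisiting the proof in Section 5 and checking that its ingredients remain valid for $H$. The two facts actually used there are (i) $u(t,x):=\mathbb{E}[H(x+B_t)]$ is a $C^{1,2}$ solution of the $G$-heat equation, and (ii) the $G$-martingale representation (Song's theorem cited above) applies to $H(B_1)$. Both hold in our setting: the weak compactness of $\mathcal{P}$ together with the finite-moment property of $G$-Brownian motion gives $H(B_1)\in L^\beta_G(\Omega_1)$ for every $\beta\geq 1$, so all expectations, the dominated passages to the limit, and the $G$-martingale decomposition make sense; and the $G$-heat equation admits classical $C^{1,2}$ solutions for polynomial-growth initial data. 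With these observations, Steps 1 and 2 of the proof of Proposition \ref{main} carry over verbatim with $H$ in place of $\varphi$.

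A more self-contained alternative, inspired by Example \ref{E2}, is to verify by direct differentiation (using positive homogeneity of $G$ and the hypothesis on $H$) that
\[
v(t,x):=(1-t)^{\rho}\,H\!\left(\frac{x}{\sqrt{1-t}}\right)
\]
is a classical solution of $\partial_t v-G(\partial_x^2 v)=0$ on $[0,1)\times\mathbb{R}$ with $v(0,\cdot)=H$; uniqueness in the class of polynomial-growth solutions then identifies $\mathbb{E}[H(B_t)]=v(t,0)=(1-t)^{\rho}H(0)$ for $t\in[0,1)$, and sending $t\uparrow 1$ with the continuity of $t\mapsto\mathbb{E}[H(B_t)]$ (which follows from the polynomial-growth bound and the moment estimates for $B_t$ under $\mathcal{P}$) delivers $\mathbb{E}[H(B_1)]=0$. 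In either approach the main obstacle is the passage from bounded to polynomial growth, which hinges on standard but non-trivial polynomial-growth regularity and uniqueness results for the $G$-heat equation; I expect this to be the only non-cosmetic step of the write-up.
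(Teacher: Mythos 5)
Your argument is correct, and it actually contains two routes, which compare with the paper as follows. Your first derivation --- substitute $H$ into Proposition~\ref{main}, use the eigenfunction identity to get $0=\rho\,\mu^H[H]=\rho\,\mathbb{E}[H(B_1)]$, and divide by $\rho>0$ --- is word for word how the paper justifies the corollary in the sentence preceding its proof; the paper is in fact silent about the mismatch between the $C_b^2$ hypothesis of Proposition~\ref{main} and the polynomial growth of $H$, so your explicit flagging of that gap (and the sketch of why the $G$-heat regularity, the realization's existence, and the martingale representation survive under polynomial growth) is more honest than the source. The paper's own ``direct proof'', however, is different from your self-contained alternative: it introduces the $G$-Ornstein--Uhlenbeck process $X^x_t=e^{-t/2}x+\int_0^te^{-\frac{1}{2}(t-s)}dB_s$, applies It\^o's formula to $e^{\rho t}H(X^x_t)$, uses the hypothesis to cancel the drift and the term $\frac{1}{2}\int e^{\rho s}H''(X^x_s)d\langle B\rangle_s-\int e^{\rho s}G(H''(X^x_s))ds$ (a non-increasing $G$-martingale) to conclude $\mathbb{E}[H(X^x_t)]=e^{-\rho t}H(x)\to 0$, and then identifies the limiting distribution of $X^x_t$ with that of $B_1$. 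Your alternative instead exhibits the self-similar solution $v(t,x)=(1-t)^{\rho}H(x/\sqrt{1-t})$ of the $G$-heat equation (the computation with positive homogeneity of $G$ does check out; it is Example~\ref{E2} run backwards in time) and lets $t\uparrow 1$. Both are legitimate and of comparable weight: the paper's route trades your reliance on uniqueness of polynomial-growth solutions of the $G$-heat equation and continuity of $t\mapsto\mathbb{E}[H(B_t)]$ at $t=1$ for the $G$-It\^o calculus and the identification of $\int_0^te^{-\frac{1}{2}(t-s)}dB_s$ as a scaled $G$-normal; your PDE route has the advantage of making the role of the eigenvalue equation completely transparent and of avoiding stochastic calculus altogether.
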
 The proof is  immediate from Proposition \ref {main}. Actually, for $P\in \mathcal{P}$ such that $E_P[H(B_1)]=\mathbb{E}[H(B_1)]$, we have
\[\rho\mathbb{E}[H(B_1)]=\rho E_P[H(B_1)]=E_P[\frac{B_1}{2}H'(B_1)-G(H''(B_1))]=0.\] Below we give a direct proof.

\begin {proof} Let $X^x_t=e^{-\frac{t}{2}}x+\int_0^te^{-\frac{1}{2}(t-s)}dB_s.$ Applying It\^o's formula to $e^{\rho t} H(X^x_t)$,  we have
\begin {eqnarray*} e^{\rho t} H(X^x_t)&=&H(x)+\int_0^te^{\rho s}(\rho H(X^x_s)-\frac{1}{2}X^x_s H'(X^x_s)+G(H''(X^x_s)))ds\\
& &+\int_0^te^{\rho s}H'(X^x_s)d B_s+ \frac{1}{2}\int_0^te^{\rho s}H''(X^x_s)d \langle B\rangle_s-\int_0^te^{\rho s}G(H''(X^x_s))ds.
\end {eqnarray*} So $\mathbb{E}[H(X^x_t)]=e^{-\rho t} H(x)$ and \[\mathbb{E}[H(B_1)]=\lim_{t\rightarrow\infty}\mathbb{E}[H(X^x_t)]=0.\]

\end {proof}


\renewcommand{\refname}{\large References}{\normalsize \ }


\begin{thebibliography}{99}

\bibitem{CGS11} Chen, Goldstein and Shao (2011) \emph{Normal Approximation
by Stein¡¯s Method.} New York: Springer.

\bibitem{DHP11} Denis, L., Hu, M. and Peng S.(2011) \emph{Function spaces
and capacity related to a sublinear expectation: application to $G$-Brownian
motion paths,} Potential Anal., 34: 139-161.

\bibitem{HJ}  Hu, M., Ji, S. (2015) \emph{Stochastic maximum principle for stochastic
recursive optimal control problem under volatility ambiguity,} arXiv:1508.07693v1 [math.PR].

\bibitem{DHP14} Hu, M., Li, H., Wang, F. Zheng, G. (2015) \emph{Invariant and
ergodic nonlinear expectations for $G$-diffusion processes,} Electronic
communications in probability, 20:1-15.

\bibitem{Peng2005} Peng, S. (2005) \emph{Nonlinear expectations and
nonlinear Markov chains,} Chin. Ann. Math. 26B(2) 159--184.

\bibitem{Peng2005b} Peng, S. (2005) \emph{Dynamically Consistent Nonlinear
Evaluations and Expectations,} arXiv: math.PR/0501415 v1.

\bibitem{P07a} Peng, S.(2007) \emph{$G$-expectation, $G$-Brownian Motion and
Related Stochastic Calculus of It\^o type}, Stochastic analysis and
applications, 541-567, Abel Symp., 2, Springer, Berlin.

\bibitem{P07b} Peng, S.(2007) \emph{$G$-Brownian Motion and Dynamic Risk
Measure under Volatility Uncertainty}, arXiv:0711.2834v1 [math.PR].

\bibitem{P08a} Peng, S.(2008) \emph{Multi-Dimensional $G$-Brownian Motion
and Related Stochastic Calculus under $G$-Expectation}, Stochastic Processes
and their Applications, 118(12): 2223-2253.

\bibitem{P08b} Peng, S.(2008) \emph{A New Central Limit Theorem under
Sublinear Expectations}, arXiv:0803.2656v1 [math.PR].

\bibitem{P09} Peng, S.(2009) \emph{Survey on normal distributions, central
limit theorem, Brownian motion and the related stochastic calculus under
sublinear expectations}, Science in China Series A: Mathematics, 52(7):
1391-1411.

\bibitem{P10} Peng, S.(2010) \emph{Nonlinear Expectations and Stochastic
Calculus under Uncertainty}, arXiv:1002.4546v1 [math.PR].


\bibitem{Song11} Song, Y.(2011) \emph{Some properties on G-evaluation and
its applications to G-martingale decomposition,} Science China Mathematics,
54(2): 287-300.

\bibitem{Song11b} Song, Y.(2011) \emph{Properties of hitting times for
G-martingales and their applications,}
Stochastic Process. Appl. 121(8): 1770-1784.

\bibitem{Song15} Song, Y.(2015) \emph{A note on
$G$-normal distributions,} Statistics and Probability Letters 106: 142-146.

\bibitem{Stein72} Stein, C. (1972).\emph{ A bound for the error in the normal approximation to the distribution of a sum of
dependent random variables.} In Proceedings of the Sixth Berkeley Symposium on Mathematical
Statistics and Probability (Vol. 2, pp. 586¨C602). Berkeley: University of California Press.

\end{thebibliography}
\end{document}